\newtheorem*{example}{\bf Example}
\newtheorem{theorem}{\bf Theorem}
\newtheorem{proposition}[theorem]{\bf Proposition}
\newtheorem{definition}[theorem]{\bf Definition}
\newtheorem{Theorem}{\bf Theorem}
\newtheorem{lemma}[theorem]{\bf Lemma}
\def\C{{\mathbb C}}
\def\N{{\mathbb N}}
\def\Q{{\mathbb Q}}
\def\P{\mathbb{P}}
\title{Complex dynamics of birational maps of $\mathbb{P}^k$ defined over a number field}
\author{Thomas Gauthier}
\address{Laboratoire de Math\'ematiques d'Orsay, B\^atiment 307, Universit\'e Paris-Saclay, 91405 Orsay Cedex, France}
\email{thomas.gauthier1@universite-paris-saclay.fr}
\author{Gabriel Vigny}
\address{LAMFA, Universit\'e de Picardie Jules Verne, 33 rue Saint-Leu, 80039 AMIENS Cedex 1, FRANCE}
\email{gabriel.vigny@u-picardie.fr}
\thanks{The first authors is partially supported by the Institut Universitaire de France.}
\thanks{The second author is partially supported by the ANR QuaSiDy, grant n${}^\mathrm{o}$ ANR-21-CE40-0016}
\thanks{Keywords: birational maps, canonical height of subvarieties, arithmetic equidistribution.}
\thanks{Mathematics~Subject~Classification~(2020):  	37P05, 37P30, 37F80.}
\begin{document}

\maketitle
%
%

\begin{abstract}
	Jonsson and Reschke \cite{Jonsson-Reschke} showed that birational selfmaps on projective surface defined over a number field satisfy the energy condition of Bedford and Diller \cite{Bedford-Diller} so their ergodic properties are very well understood. Under suitable hypotheses on the indeterminacy loci, we extend that result to birational maps $\P^k \dashrightarrow \P^k$, $k\geq 2$, defined over a number field, showing that they satisfy a similar energy condition introduced by De Thélin and the second author \cite{ThelinVigny1}. As a consequence, we can construct for such maps their Green measure and deduce several important ergodic consequences. 
	
	Under a mild additional hypothesis, we show that generic sequences of Galois invariant subset of periodic points equidistribute toward the Green measure.  
\end{abstract}

\section{Introduction}

Let $f:X\dashrightarrow X$ be a birational map defined on a complex projective surface. Assume that the action of $f$ on the cohomology $H^{1,1}(X)$ has  spectral radius $\lambda_1(f)>1$, which is necessary to have positive entropy. In \cite{Bedford-Diller}, Bedford and Diller introduced an \emph{energy condition} which can be restated as
\[ \sum_{n\in \N } \frac{1}{\lambda_1(f)^n} \log d(f^{-n}(I_f), I_{f^{-1}}) >- \infty \ \mathrm{and} \ \sum_{n\in \N } \frac{1}{\lambda_1(f)^n} \log d(f^n(I_{f^{-1}}), I_{f}) ) >- \infty, \]
where $I_f$ (resp. $I_{f^{-1}}$) is the indeterminacy locus of $f$ (resp. of $f^{-1}$).
Under that condition, the authors  managed to construct a natural mixing hyperbolic measure for $f$ which does not charge curves, and combining with results of Dujardin \cite{Dujardin_laminar}, we have that this measure is of maximal entropy $\log \lambda_1(f)$ and describes the equidistribution of saddle periodic points. In other words, the energy condition is a natural condition to extend the ergodic properties of Hénon maps to birational maps on a complex projective surface (e.g. \cite{BedfordSmillie1, BedfordLyubichSmillie}). 

It is then natural to try and produce examples of birational maps satisfying the energy condition. Our source of inspiration in this article is the work of Jonsson and Reschke \cite{Jonsson-Reschke} which says the following: assume $f:X\dashrightarrow X$ is a birational maps defined on a complex projective surface $X$ where $X$ and $f$ are defined over a number field, assume that $\lambda_1(f)>1$, then, up to birational change of coordinates
\[\xymatrix{X'\ar[d]_{\pi} \ar@{-->}[r]^{f'}& X'\ar[d]_{\pi} \\
	X\ar@{-->}[r]_{f}  &   X }
\]
the map $f'$ satisfies the energy condition, also, as the measure $\mu_{f'}$ does not charge curve, it descends to a measure $\mu_f$ with the same ergodic properties. Note that the choice of the birational model is to ensure that the map $f'$ is \emph{algebraically stable} (see below), and when $f : \P^2  \dashrightarrow \P^2$ already is algebraically stable, then there is no need to change the model. Roughly speaking, combining \cite{Jonsson-Reschke} and \cite{Bedford-Diller} tells us that we have a very good understanding of birational maps defined over a number field.\\

In the present note, we focus on higher dimension and we consider the dynamics of birational maps $f:\mathbb{P}^k\dashrightarrow \mathbb{P}^k$ defined over a number field. Just like birational maps of $\P^2(\C)$ generalize Hénon maps, the maps we consider are natural generalizations of Hénon-Sibony maps, which are the polynomials automorphisms of $\C^k$ for which  $I_{f^{-1}} \cap I_{f}=\varnothing$ and whose dynamics is very well understood, see~\cite{Sibony}. In other words, we make the assumption that
there is an integer $1\leq s\leq k-1$ such that 
\begin{align}
	\dim I_f=k-s-1 \quad \text{and} \quad \dim I_{f^{-1}}=s-1,\tag{$\dag$}\label{gooddim}
\end{align}
where $I_f$ (resp. $I_{f^{-1}}$) is the indeterminacy locus of $f$ (resp. of $f^{-1}$).
We say that $f$ satisfies the \emph{improved algebraic stability condition} if we have
\begin{align}
\bigcup_{n\geq0}f^n(I_{f^{-1}})\cap I_f=\bigcup_{n\geq0}f^{-n}(I_{f})\cap I_{f^{-1}}=\varnothing.\tag{$\star$}\label{AS}
\end{align}
 Let $d$ (resp. $\delta$) be the algebraic degree of $f$ (resp. $f^{-1}$). Under the hypothesis \eqref{gooddim} and \eqref{AS}, one can show that $d^s=\delta^{k-s}$ and that the algebraic degree of $f^n$ (resp. $f^{-n}$) is $d^n$ (resp. $\delta^n$), see~\cite{ThelinVigny1}. De Thélin and the second author introduced for such maps a finite energy condition which generalizes Bedford-Diller's condition~\cite[Definition 3.1.9]{ThelinVigny1}, which can be stated as
\begin{align}\label{energy_higher2}
\left\{\begin{array}{l}
\displaystyle \sum^{\infty}_{n=0} \frac{1}{d^{sn}} \int_{f^n(I_{f^{-1}})} \phi \cdot f^*(\omega^{s-1}) >-\infty  \  \text{and} \\
\displaystyle  \sum^{\infty}_{n=0} \frac{1}{\delta^{n(k-s)}} \int_{f^{-n}(I_f)} \psi \cdot (f^{-1})^*(\omega^{k-s-1}) >-\infty. 
 \end{array}	\right.
\end{align}
where $\omega$ is the Fubini-Study form on $\P^k(\C)$ and where $\phi$ (resp. $\psi$) is a quasi-potential of $d^{-1} f^*(\omega)$, i.e. $d^{-1} f^*(\omega)=\omega + dd^c \phi$ (resp. a quasi-potential of $\delta^{-1} (f^{-1})^*(\omega)$, i.e. $\delta^{-1} (f^{-1})^*(\omega)=\omega + dd^c \psi$). Under that condition, we can construct a mixing and hyperbolic measure of maximal entropy $s \log d$, see~\cite{ThelinVigny1}. The main result of the paper is the following.

\begin{Theorem}\label{tm:good-dynamics}
Any birational map $f:\mathbb{P}^k\dashrightarrow \mathbb{P}^k$ which satisfies \eqref{AS} and \eqref{gooddim} and which is defined over a number field satisfies the finite energy condition \eqref{energy_higher2}. 

In particular, its Green measure $\mu_f$ is well-defined, mixing, hyperbolic, and has maximal entropy $s\log d$.
\end{Theorem}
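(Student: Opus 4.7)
The plan is to adapt the arithmetic strategy of Jonsson--Reschke \cite{Jonsson-Reschke} to the higher-dimensional finite-energy condition \eqref{energy_higher2}. Once \eqref{energy_higher2} is verified, the construction of the Green measure $\mu_f$ and its mixing, hyperbolicity and maximal entropy $s\log d$ are given directly by the main results of \cite{ThelinVigny1}, so the only thing to prove is the finite-energy condition itself. Since \eqref{gooddim} and \eqref{AS} are symmetric under $f\leftrightarrow f^{-1}$ (swapping $(d,s)$ for $(\delta,k-s)$), it is enough to focus on the first sum in \eqref{energy_higher2}.

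The first step I would carry out is a pluripotential reduction. Since $\phi$ is a quasi-potential for $d^{-1}f^*\omega$, it is smooth outside $I_f$ with logarithmic singularities along it: near $I_f$ one has $\phi(x)\geq C_1 \log d(x,I_f)-C_2$. Integrating against $f^*(\omega^{s-1})$ on $f^n(I_{f^{-1}})$ gives
\[
\int_{f^n(I_{f^{-1}})}\phi\cdot f^*(\omega^{s-1})\;\geq\; C_1\,\log d\bigl(f^n(I_{f^{-1}}),I_f\bigr)\cdot M_n\;-\;C_2 M_n,
\]
where $M_n:=\int_{f^n(I_{f^{-1}})}f^*(\omega^{s-1})$ is controlled from above by the degree formulas of \cite{ThelinVigny1} under \eqref{AS} and \eqref{gooddim}. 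The task is thus reduced to establishing a quantitative lower bound on $\log d(f^n(I_{f^{-1}}),I_f)$ whose ratio to $d^{sn}/M_n$ is summable in $n$.

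The second step is arithmetic. Since $f$, $I_f$ and $I_{f^{-1}}$ are all defined over a number field $K$, so is the algebraic cycle $f^n(I_{f^{-1}})$, and by \eqref{AS} it is disjoint from $I_f$. Here I would invoke two tools from arithmetic intersection theory in the spirit of Bost--Gillet--Soul\'e and Philippon: (i) a Liouville / arithmetic B\'ezout inequality relating $-\log d(X,Y)$ for disjoint $K$-subvarieties $X,Y\subset\P^k$ linearly to their heights and degrees; and (ii) a height-growth estimate for $h(f^n(I_{f^{-1}}))$ obtained from the functoriality of heights under $f$, combined with the clean degree relation $\deg(f^n)=d^n$ valid under \eqref{AS}. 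Together these should supply the required distance bound, and the second sum is handled by applying the same argument to $f^{-1}$.

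The main obstacle I anticipate is the height-growth estimate (ii). In \cite{Jonsson-Reschke} it is needed only at the level of points, where the Weil height transforms cleanly; for positive-dimensional cycles in $\P^k$ with $k\geq 3$ one must work in the arithmetic Chow framework and avoid the accumulation of exponential error terms at each iteration. The hypothesis \eqref{AS} should be crucial in this step, as it forces the dynamical and algebraic degrees to coincide and prevents the forward orbit of $I_{f^{-1}}$ from creating extra contributions that would spoil the growth rate. A secondary difficulty is that the naive pluripotential reduction risks being too crude in the borderline case, and may require refinement via the canonical-height framework for $f$, or a more careful partition of $f^n(I_{f^{-1}})$ into pieces according to their distance to $I_f$, in order to absorb the log-singularity into a summable series.
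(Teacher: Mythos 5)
Your proposal does not follow the paper's route and, as written, the reduction in steps (i)--(ii) has a gap that I believe is fatal. The paper never passes through a distance estimate at a single place: it works directly with Zhang's heights of subvarieties and Chambert-Loir's arithmetic intersection. Concretely, with $\bar L_n=\mathscr{O}_{\mathbb{P}^k}(1)|_{f^n(I_{f^{-1}})}$ carrying the standard adelic metric and $\bar L_{f,n}$ its twist by $e^{-\varphi_f}$, the relation $f^*\bar L_{n+1}=d\,\bar L_{f,n}$ makes the normalized sum
$\sum_{n=0}^N \frac{1}{s\,d^{ns}}\bigl((\bar L_{f,n}^{\,s}|f^n(I_{f^{-1}}))-(\bar L_n^{\,s}|f^n(I_{f^{-1}}))\bigr)$
telescope, and non-negativity of heights of subvarieties (with respect to semi-positive adelic metrics) bounds it below by a constant. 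Lemma~\ref{lm:energy} then rewrites each term as $\sum_v n_v\,\mathrm{E}_{f^n(I_{f^{-1}}),v}(\cdots)$, and the uniform bound $\varphi_{f,v}\le C_v$ (with $C_v=0$ for all but finitely many $v$) controls the contribution at every place except the archimedean one of interest; the archimedean lower bound then drops out. The special case $q=0$ recovers exactly the adelic telescoping used by Jonsson--Reschke; neither they nor the present paper invoke a Liouville/B\'ezout inequality.

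Here is why your reduction does not close. First, replacing $\int_{f^n(I_{f^{-1}})}\phi\cdot f^*(\omega^{s-1})$ by $C_1 M_n\log d(f^n(I_{f^{-1}}),I_f)-C_2 M_n$ swaps an \emph{average} of $\log d(\cdot,I_f)$ over $f^n(I_{f^{-1}})$ (against $f^*(\omega^{s-1})$) by its \emph{minimum}; the resulting sufficient condition $\sum_n d^{-n}\log d(f^n(I_{f^{-1}}),I_f)>-\infty$ is strictly stronger than \eqref{energy_higher2} and there is no reason it holds. Second, the Liouville-type inequality you want in step (ii) is stated for subvarieties of complementary dimension, whereas $\dim I_{f^{-1}}+\dim I_f=(s-1)+(k-s-1)=k-2$, so it does not apply as such. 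Third, even granting a bound of the shape $-\log d(f^n(I_{f^{-1}}),I_f)\lesssim h(f^n(I_{f^{-1}}))\deg(I_f)+h(I_f)\deg(f^n(I_{f^{-1}}))$, the best available height growth under \eqref{AS} and \eqref{gooddim} is $h(f^n(I_{f^{-1}}))=O(d^n)$ (this \emph{can} be proved by the same telescoping trick, using only $\varphi_{f,v}\le C_v$), and $\deg(f^n(I_{f^{-1}}))\sim d^{n(s-1)}$. The resulting bound $-\log d(f^n(I_{f^{-1}}),I_f)\lesssim d^{n\max(1,s-1)}$ makes $\sum_n d^{-n}\log d(f^n(I_{f^{-1}}),I_f)$ diverge for every $s\ge 1$. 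The exponents sit exactly at the critical rate because the single-place Liouville bound is saturated only by the worst-case conjugate; the paper's argument is sharper precisely because it keeps the whole adelic sum (where telescoping and non-negativity give cancellation) and only peels off the archimedean contribution at the end. To repair your plan you would need to give up on a per-place distance estimate and work with the arithmetic intersection numbers $(\bar L^{\,s}\,|\,f^n(I_{f^{-1}}))$ themselves, as in Theorem~\ref{tm:DTV-global}.
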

As in \cite{Jonsson-Reschke}, the idea is to interpret \eqref{energy_higher2} as the local contribution of a canonical height of the subvariety $I_f$ at an archimedean place of the field of definition of $f$. The difficulty is that $I_f$ is not a finite set of points anymore so we have to use the notion of height of subvarieties following Zhang \cite{Zhang-positivity} and use Chambert-Loir's interpretation \cite{ACL} of the arithmetic intersection to relate local contributions of the height with \eqref{energy_higher2}.

In \cite{Jonsson-Reschke}, algebraic stability is not assumed. In our case, we start with that assumption for two reasons: first, \eqref{energy_higher2} was only defined for maps on $\P^k(\C)$ where the super-potentials theory of Dinh and Sibony \cite{DinhSibonysuper} is efficient, second, it is not clear that one can even find a suitable model where a birational map is algebraically stable in dimension $>2$ (Diller and Favre made it possible in dimension $2$ \cite{DillerFavre} but the recent work \cite{BDJK} of Bell, Diller, Jonsson, and Krieger shows it is not true anymore for $k=3$). Nevertheless, we show in $§$ \ref{examples} how to produce many examples which are not Hénon-Sibony maps.

\bigskip

Let us now move to the problem of the equidistribution of periodic points toward $\mu_f$. For Hénon-Sibony maps, the equidistribution is due to Dinh and Sibony \cite{DinhSibonyequi} using complex methods and to Lee \cite{Lee-Henon} in the arithmetic setting. We want to extend Lee's result to maps satisfying \eqref{AS}, \eqref{gooddim}, and \eqref{energy_higher2}. For that, we rely on De Th\'elin and Nguyen Van Sang~\cite{Thelin-Nguyen} who proved that for such maps, isolated periodic points are Zariski dense in $\mathbb{P}^k$. We then apply an arithmetic equidistribution theorem of the first author~\cite[Theorem~6]{Good-height} to deduce the following from Theorem~\ref{tm:good-dynamics}. 
\begin{Theorem}\label{tm:distrib-periodic}
Let $f:\mathbb{P}^k\dashrightarrow\mathbb{P}^k$ be a birational map defined over a number field and satisfying \eqref{AS} and \eqref{gooddim}. Assume in addition there is a constant $C\geq0$ such that
\begin{equation}\label{Lee}
	\frac{1}{d}h\circ f+\frac{1}{\delta}h\circ f^{-1}\geq \left(1+\frac{1}{d\delta}\right)h-C\end{equation}
on $(\mathbb{P}^k\setminus (I_f\cup I_{f^{-1}}))(\bar{\mathbb{Q}})$, where $h$ is the naive logarithmic height.
Then, there exist generic sequence $(F_i)_i$ of finite Galois invariant subsets of $\mathbb{P}^k(\bar{\mathbb{Q}})$ of periodic points of $f$ and, for such sequence, we have
\[\frac{1}{\# F_i}\sum_{x\in F_i}\delta_x\rightarrow \mu_f,\]
in the weak sense of probability measures on $\mathbb{P}^k(\mathbb{C})$.
\end{Theorem}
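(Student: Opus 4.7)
My plan is to combine the analytic information from Theorem~\ref{tm:good-dynamics} with the arithmetic equidistribution criterion of \cite[Theorem~6]{Good-height}, applied to a canonical height built out of the inequality \eqref{Lee}. The ingredients are: the measure $\mu_f$ together with its mixing, hyperbolic, maximal-entropy properties (Theorem~\ref{tm:good-dynamics}); a Zariski dense set of isolated periodic points defined over $\bar{\mathbb{Q}}$, provided by \cite{Thelin-Nguyen}; and a pair of canonical adelically metrized line bundles whose heights vanish on periodic points and whose archimedean mixed Monge--Amp\`ere is $\mu_f$.

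I would first construct the canonical height. Combining the standard inequalities $h\circ f\le d\,h+O(1)$, $h\circ f^{-1}\le\delta\,h+O(1)$ (valid off $I_f$, resp.\ $I_{f^{-1}}$) with \eqref{Lee} applied at $f^n(x)$, the successive differences $b_n(x):=d^{-n}h(f^n(x))-d^{-(n-1)}h(f^{n-1}(x))$ satisfy both $b_{n+1}\le O(d^{-n-1})$ and $b_{n+1}\ge (d\delta)^{-1}b_n-O(d^{-n})$. The geometric contraction $(d\delta)^{-1}<1$ then forces $|b_n|=O(d^{-n})$, so the series $\sum b_n$ converges absolutely and
\[\hat h^+(x)\pe\lim_{n\to\infty}\frac{h(f^n(x))}{d^n}\]
exists on $(\mathbb{P}^k\setminus(I_f\cup I_{f^{-1}}))(\bar{\mathbb{Q}})$, is nonnegative, differs from $h$ by a uniform $O(1)$, and satisfies $\hat h^+\circ f=d\,\hat h^+$. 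Symmetrically, one obtains $\hat h^-$ with $\hat h^-\circ f^{-1}=\delta\,\hat h^-$. For a periodic point $x$ of period $N$, $d^N\hat h^+(x)=\hat h^+(x)$ forces $\hat h^+(x)=0$ (as $d\ge2$), and likewise $\hat h^-(x)=0$; hence $\hat h_f\pe\hat h^++\hat h^-$ vanishes identically on periodic points.

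Next, I would realize $\hat h^\pm$ as the heights of semipositive adelic metrizations $\overline{\mathcal{O}(1)}^{\pm}$ of $\mathcal{O}_{\mathbb{P}^k}(1)$, good in the sense of \cite{Good-height}. At each place $v$, the metric arises as a uniform limit of pullbacks of the standard $v$-adic metric by $d^{-n}(f^n)^*$ (resp.\ $\delta^{-n}(f^{-n})^*$), convergence being governed by the same estimates as above. At the archimedean place, the finite energy condition \eqref{energy_higher2} guaranteed by Theorem~\ref{tm:good-dynamics} together with Chambert-Loir's formalism \cite{ACL} identifies the mixed Monge--Amp\`ere $c_1(\overline{\mathcal{O}(1)}^{+})^{s}\wedge c_1(\overline{\mathcal{O}(1)}^{-})^{k-s}$ with the Green measure $\mu_f$ constructed in \cite{ThelinVigny1}. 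This identification, together with the verification that the limit metric extends with the required regularity across $I_f\cup I_{f^{-1}}$, is the main technical obstacle; it hinges on the super-potential theory of Dinh--Sibony and on the energy estimates from the proof of Theorem~\ref{tm:good-dynamics}.

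Finally, since the set of isolated periodic points of $f$ is Zariski dense in $\mathbb{P}^k$ by \cite{Thelin-Nguyen}, and the set of proper $\bar{\mathbb{Q}}$-subvarieties of $\mathbb{P}^k$ is countable, a standard diagonal argument produces a sequence $(F_i)_i$ of Galois orbits of isolated periodic points that is generic: at step $i$, pick a periodic point outside the union of the first $i$ subvarieties in an enumeration, and take its Galois orbit. Because $\hat h_f\equiv0$ on every $F_i$, applying \cite[Theorem~6]{Good-height} to the adelic data $(\overline{\mathcal{O}(1)}^{+},\overline{\mathcal{O}(1)}^{-})$ yields the equidistribution of the $F_i$ toward the associated measure, which by the previous step is $\mu_f$.
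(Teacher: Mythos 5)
Your high-level plan (build canonical heights vanishing on periodic points, get Zariski density of periodic points from \cite{Thelin-Nguyen}, feed everything into \cite[Theorem~6]{Good-height}) is the one the paper follows, and your derivation of the canonical heights from \eqref{Lee} is fine; but the central step of your argument contains a genuine gap. You propose to realize $\hat h^\pm$ as heights of semipositive adelic metrizations $\overline{\mathcal{O}(1)}^{\pm}$ on $\mathbb{P}^k$, obtained as limits of $d^{-n}(f^n)^*$ (resp.\ $\delta^{-n}(f^{-n})^*$) of the Fubini--Study metric, and then to apply the equidistribution theorem to this pair of line bundles. This cannot work: for a birational map the pullback metric $d^{-n}(f^n)^*\|\cdot\|_0$ is singular along $f^{-n}(I_f)$ (resp.\ along $f^n(I_{f^{-1}})$ for the inverse), and the union $\bigcup_n f^n(I_{f^{-1}})$ can be Zariski dense in $\mathbb{P}^k$; the paper says explicitly that the canonical height is in general \emph{not} the height of any $\mathbb{R}$-divisor or $b$-divisor, so there is no semipositive adelic limit metrization to extend ``with the required regularity'' across $I_f\cup I_{f^{-1}}$. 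Relatedly, \cite[Theorem~6]{Good-height} (Theorem~\ref{tm:equidistrib} here) is \emph{not} an equidistribution theorem for a single, or a pair of, adelic line bundles on $X$: its input is a \emph{quasi-height}, i.e.\ a whole sequence of modifications $\psi_n:X_n\to\mathbb{P}^k$ together with metrized big-and-nef bundles $\bar{L}_n$ on $X_n$ satisfying a mass-convergence condition (1) and a uniform arithmetic intersection bound (2). The paper's actual proof builds $X_n$ as a simultaneous resolution of $f^n$ and $f^{-n}$, sets $\bar{L}_n:=d^{-n}f_n^*\bar{L}_0+\delta^{-n}g_n^*\bar{L}_0$, and verifies (1) using B\'ezout and the Hartogs convergence of \cite{ThelinVigny1} (only the $j=s$ term in the binomial expansion of $c_1(\bar{L}_n)^k$ survives in the limit, giving $\mu_f$) and (2) using the uniform bound $\varphi_{f,v}+\varphi_{f^{-1},v}\le C_v$; this circumvents precisely the nonexistence of a limit adelic metrization.

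A secondary, smaller issue: your claim that $\hat h^+$ ``differs from $h$ by a uniform $O(1)$'' is false. Your recursion $b_{n+1}\ge(d\delta)^{-1}b_n-O(d^{-n})$ gives $|b_n|\le O\bigl((1+|b_0(x)|)d^{-n}\bigr)$ with a constant depending on $x$; indeed on (a neighbourhood of) the stable set one has $\hat h^+\equiv 0$ while $h$ is unbounded, so $\hat h^+-h$ cannot be uniformly bounded. The paper works only with $\limsup$'s and the one-sided estimate \eqref{cequonveut?}, which is what is actually needed for quasi-smallness of the $F_i$; you should drop the two-sided comparability claim (which, if true, would make $\hat h^+$ a Weil height for $\mathcal{O}(1)$ and re-introduce the first issue).
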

We construct the canonical height under \eqref{AS} and \eqref{gooddim} (Proposition~\ref{prop:canonical-height}), but we need the additional assumption \eqref{Lee} to derive the equidistribution theorem~\ref{tm:distrib-periodic} (even though we expect this hypothesis is not necessary). The condition \eqref{Lee} is not true in general but by Kawaguchi~\cite{Kawaguchi_inequality} (or Lee \cite{Lee_inequality}), it is true for Hénon-Sibony maps and thus for maps of the form $A \circ f$ where $A$ is an automorphism  and $f$ is a Hénon-Sibony map both defined over a number field, see~\S\ref{examples} for examples.  

\smallskip

It is worth noticing that apart from the case of Hénon-Sibony maps (see~\cite{Lee-Henon} and~\cite{DinhSibonyequi}), this is the first equidistribution result for birational maps in dimension at least 3. Remark also that the canonical height is in general \emph{not} a height associated to a $\mathbb{R}$-divisor or a $b$-divisor, since the set $\bigcup_{n\geq1}f^{n}(I_{f^{-1}})$ can be Zariski dense in $\mathbb{P}^k$ (see~\S~4.3). In particular, to our knowledge Theorem~6 from \cite{Good-height} is the only known theorem of equidistribution of (quasi-)small points that can be applied here.

\subsection*{Acknowledgments}
We would like to thank Chong Gyu Lee for explanations on condition \eqref{Lee} and the fact that it is not true in general. We also thank Nguyen Bac Dang for many discussions at the early stage of the article.

\section{Preliminaries}
\subsection{Metrized line bundles and mutual energy}
Let $(K,|\cdot|)$ be an algebraically closed field of characteristic zero which is complete with respect to a non-trivial absolute value. Let $X$ be a smooth projective variety of dimension $q\geq2$ and let $L$ be an ample line bundle on $X$, both defined over $K$. In what follows, we denote by $X^{\mathrm{an}}$ the Berkovich analytification of $X$.

\medskip

We let $\|\cdot\|_1$ and $\|\cdot\|_2$ be two semi-positive continuous metrics on $L$ and denote by $c_1(L,\|\cdot\|_i)$ the curvature form associated with $\|\cdot\|_i$. The continuous function
\[\phi:=\log\frac{\|\cdot\|_1}{\|\cdot\|_2}:X^{\mathrm{an}}\to\mathbb{R}\] 
defines a continuous metric on $\mathscr{O}_X$, which is a model metric if both $\|\cdot\|_1$ and $\|\cdot\|_2$ are model metrics.
The \emph{mutual energy} of $\|\cdot\|_1$ and $\|\cdot\|_2$ on $X$ is defined as
\[\mathrm{E}_X(L,\|\cdot\|_1,\|\cdot\|_2):=\frac{1}{(q+1)}\sum_{j=0}^q\int_{X^{\mathrm{an}}}\phi \cdot c_1(L,\|\cdot\|_1)^j\wedge c_1(L,\|\cdot\|_2)^{q-j}.\]

\subsection{Adelic metrics and arithmetic intersection}
Let $X$ be a projective variety of dimension $k$, and let $L_0,\ldots,L_k$ be $\mathbb{Q}$-line bundles on $X$, all defined over a number field $\mathbb{K}$.
Assume $L_i$ is equipped with an adelic continuous metric $\{\|\cdot\|_{v,i}\}_{v\in M_\mathbb{K}}$ and denote $\bar{L}_i:=(L_i,\{\|\cdot\|_v\}_{v\in M_\mathbb{K}})$. Assume $\bar{L}_i$ is  semi-positive for $1\leq i\leq k$. 
 Fix a place $v\in M_\mathbb{K}$. Denote by $X_v^\mathrm{an}$ the Berkovich analytification of $X$ at the place $v$. We also let $c_1(\bar{L}_i)_v$ be the curvature form of the metric $\|\cdot\|_{v,i}$ on $L_v^\mathrm{an}$.

Note that the hypothesis that $\bar{L}_i$ is adelic means in particular that for all but finitely many $v\in M_\mathbb{K}$, the metric $\|\cdot\|_{v,i}$ is a model metric on $L_{i,v}^{\mathrm{an}}$.

\medskip

In the sequel, for a given place $v\in M_\mathbb{K}$, denote by $\mathbb{C}_v$ an algebraically closed and complete extension of $(\mathbb{K},|\cdot|_v)$.

\medskip

For any closed subvariety $Y$ of dimension $q$ of $X$, the arithmetic intersection number $\left(\bar{L}_0\cdots\bar{L}_q|Y\right)$ is symmetric and multilinear with respect to the $L_i$'s. As observed by Chambert-Loir~\cite{ACL}, we can define $\left(\bar{L}_0\cdots\bar{L}_q|Y\right)$ inductively by
\[\left(\bar{L}_0\cdots\bar{L}_q|Y\right)=\left(\bar{L}_1\cdots\bar{L}_q|\mathrm{div}(s)\cap Y\right)+\sum_{v\in M_\mathbb{K}}n_v\int_{Y_v^{\mathrm{an}}}\log\|s\|^{-1}_v \bigwedge_{j=1}^qc_1(\bar{L}_i)_v,\]
for any global section $s\in H^0(X,L_0)$ such that the intersection $\mathrm{div}(s)\cap Y$ is proper. In particular, if $L_0$ is the trivial bundle and $\|\cdot\|_{v,0}$ is the trivial metric at all places but a finite set $S$ of places of $\mathbb{K}$, this gives
\begin{align}
\left(\bar{L}_0\cdots\bar{L}_q|Y\right)=\sum_{v\in S}n_{v}\int_{Y_{v}^{\mathrm{an}}}\log\|s\|^{-1}_{v,0} \bigwedge_{j=1}^qc_1(\bar{L}_i)_{v}.\label{eq:adelic}
\end{align}
When $\bar{L}$ is an ample $\mathbb{Q}$-line bundle endowed with a semi-positive continuous adelic metric, following Zhang~\cite{Zhang-positivity}, we can define $h_{\bar{L}}(Y)$ as
\[ h_{\bar{L}}(Y):=\frac{\left(\bar{L}^{q+1}|{Y}\right)}{(q+1)[\mathbb{K}:\mathbb{Q}]\deg_{L}(Y)},\]
where $\deg_L(Y)=(L_{|Y})^q$ is the volume of the line bundle $L$ restricted to $Y$.

\subsection{Arithmetic intersection and mutual energies}
Pick a smooth projective variety $X$ and an ample line bundle $L$, both defined over a number field $\mathbb{K}$. Let $\{\|\cdot\|_{v,1}\}_{v\in M_\mathbb{K}}$ and $\{\|\cdot\|_{v,2}\}_{v\in M_\mathbb{K}}$ be two adelic semi-positive metrics on $L$ and denote by $\bar{L}_i:=(L,\{\|\cdot\|_{v,i}\}_{v\in M_\mathbb{K}})$. For a given place $v\in M_\mathbb{K}$,  we denote by $\mathrm{E}_{X,v}(L,\|\cdot\|_{v,1},\|\cdot\|_{v,2})$ the mutual energy of the metrics $\|\cdot\|_{v,1}$ and $\|\cdot\|_{v,2}$ on $L_v^\mathrm{an}$.

\begin{lemma}\label{lm:energy}
With the above notations:
\[\frac{1}{q+1}\left((\bar{L}_1^{q+1}|X)-(\bar{L}_2^{q+1}|X)\right)=\sum_{v\in M_\mathbb{K}}n_v\cdot\mathrm{E}_{X,v}(L,\|\cdot\|_{v,1},\|\cdot\|_{v,2}).\]
\end{lemma}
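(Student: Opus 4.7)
The argument is a telescoping computation in the arithmetic Picard group, combined with the Chambert-Loir inductive formula recalled in the previous subsection. I would set it up as follows.

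First, I would introduce the ``difference'' metrized line bundle $\bar{M} := \bar{L}_1 \otimes \bar{L}_2^{-1}$. Its underlying line bundle is $\mathscr{O}_X$, and the canonical global section $s = 1$ has norm $\log \|1\|_v = \phi_v := \log(\|\cdot\|_{v,1}/\|\cdot\|_{v,2})$ at each place $v \in M_\mathbb{K}$. Since the two families $\{\|\cdot\|_{v,i}\}_{v}$ form adelic structures on the same $L$, the potential $\phi_v$ vanishes identically at all but finitely many places, so any infinite sum over $M_\mathbb{K}$ weighted by $\phi_v$ automatically has finite support and convergence is not an issue.

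Next, by multilinearity of arithmetic intersection on the group generated by adelic semi-positively metrized line bundles, I would use the factorization $\bar{L}_1^{q+1} - \bar{L}_2^{q+1} = \bar{M} \cdot \sum_{j=0}^q \bar{L}_1^j \cdot \bar{L}_2^{q-j}$ to obtain the telescoping identity
$$(\bar{L}_1^{q+1}|X) - (\bar{L}_2^{q+1}|X) = \sum_{j=0}^q \bigl(\bar{M} \cdot \bar{L}_1^j \cdot \bar{L}_2^{q-j} \,\big|\, X\bigr).$$
For each $j$, I would apply the Chambert-Loir induction formula with the section $s = 1$ of the first factor $\mathscr{O}_X$: because $\mathrm{div}(1) = 0$ the geometric recursion terminates immediately and only the local integrals remain, giving
$$\bigl(\bar{M} \cdot \bar{L}_1^j \cdot \bar{L}_2^{q-j} \,\big|\, X\bigr) = \sum_{v \in M_\mathbb{K}} n_v \int_{X_v^{\mathrm{an}}} \phi_v \cdot c_1(\bar{L}_1)_v^j \wedge c_1(\bar{L}_2)_v^{q-j},$$
up to the sign convention used for $\log\|s\|^{\pm1}$ in the curvature form. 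Summing over $j = 0,\dots,q$, dividing by $q+1$, and recognizing the inner sum as the local mutual energy $\mathrm{E}_{X,v}(L,\|\cdot\|_{v,1},\|\cdot\|_{v,2})$ then yields the claimed formula.

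The only mild point that I would check carefully---and the closest thing to an obstacle---is that $\bar{M}$ itself is not a semi-positive adelic metrized line bundle, so one must justify both the well-definedness of the mixed intersections $\bigl(\bar{M} \cdot \bar{L}_1^j \cdot \bar{L}_2^{q-j}\,|\,X\bigr)$ and the applicability of the inductive formula to them. This is standard: multilinearity extends the intersection pairing to the subgroup of $\widehat{\mathrm{Pic}}(X)$ generated by semi-positive adelic metrics, and each curvature current $c_1(\bar{L}_i)_v$ is a genuine positive measure (Bedford-Taylor at archimedean places, Chambert-Loir at non-archimedean ones) against which the continuous function $\phi_v$ is integrable. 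No real difficulty arises; once the formalism is in place, the lemma drops out of the telescoping.
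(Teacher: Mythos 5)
Your proof is correct and follows essentially the same route as the paper: both use multilinearity to telescope $(\bar{L}_1^{q+1}|X)-(\bar{L}_2^{q+1}|X)$ into a sum of mixed terms involving the trivially-underlying difference bundle, then apply the Chambert-Loir inductive formula with the constant section $s\equiv 1$ to reduce each term to a sum of local integrals. Your extra remarks about sign conventions and the extension of the intersection pairing to differences of semi-positive metrics are reasonable side observations, but they do not change the argument.
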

\begin{proof}
Since the arithmetic intersection product is multilinear, we  have
\begin{align*}
(\bar{L}_1^{q+1}|X)-(\bar{L}_2^{q+1}|X) & = \sum_{j=0}^q\left((\bar{L}_1-\bar{L}_2)\cdot \bar{L}_1^j\cdot\bar{L}_2^{q-j}|Y\right).
\end{align*}
 Note that by assumption, $\bar{L}_1-\bar{L}_2$ is the trivial bundle endowed with an adelic continuous metric which vanishes for all $v\notin S$ ($S\subset M_\mathbb{K}$ is a finite set such that for any $v\notin S$, we have $\|\cdot\|_{v,2}=\|\cdot\|_{v,1}$ as metrics on $L_v^\mathrm{an}$). Using \eqref{eq:adelic} with the constant section $s\equiv1$ of the trivial bundle, we find
\begin{align*}
\left((\bar{L}_1-\bar{L}_2)\cdot \bar{L}_1^j\cdot\bar{L}_2^{q-j}|Y\right) & = \sum_{v\in S}n_v\int_{X_v^\mathrm{an}}\phi_v\cdot c_1(\bar{L}_1)_v^j\wedge  c_1(\bar{L}_2)_v^{q-j},
\end{align*}
where $\phi_v=\log(\|\cdot\|_{1,v}/\|\cdot\|_{2,v})$. We thus get
\[(\bar{L}_1^{q+1}|X)-(\bar{L}_2^{q+1}|X)=(q+1)\sum_{v\in S}n_v\cdot \mathrm{E}_{X,v}(L,\|\cdot\|_{v,1},\|\cdot\|_{v,2}).\]
Together with the fact that $\mathrm{E}_{X,v}(L,\|\cdot\|_{v,1},\|\cdot\|_{v,2})=0$ for all $v\notin S$, this gives the lemma.
\end{proof}


\subsection{Dynamical degrees and algebraic stability}
Let $K$ be an algebraically closed field of characteristic zero and let $f:\mathbb{P}^k\dashrightarrow \mathbb{P}^k$ be a dominant rational map defined over $K$. Recall that the $j$-\emph{dynamical degree} of $f$ can be computed as

\[\lambda_j(f):=\lim_{n\to\infty}\left((f^n)^*(L^j)\cdot L^{k-j}\right)^{1/n},\]
for any ample line bundle $L$ on $\mathbb{P}^k_K$ (see \cite{RS, DinhSibonydegree} for the complex case and \cite{Truong-dyn-degree,Bac-dyn-degree} for arbitrary characteristic).

\medskip

Assume furthermore $f:\mathbb{P}^k\dashrightarrow \mathbb{P}^k$ is a birational map which satisfies the improved algebraic stability condition \eqref{AS} and the dimension hypothesis \eqref{gooddim}. Let $d$ be the algebraic degree of $f$ and $\delta$ be the algebraic degree of $f^{-1}$. 
In this case, by \cite[$§$~3.1]{ThelinVigny1}, we have $\lambda_1(f)=d$ and $\lambda_1(f^{-1})=\delta$,
with $\lambda_j(f)=\lambda_1(f)^j=d^j$ for all $1\leq j\leq s$ and with $\lambda_\ell(f)=\lambda_1(f^{-1})^{k-\ell}=\delta^{k-\ell}$ for all $s\leq \ell\leq k$. In particular, $d^s=\delta^{k-s}$.

\subsection{The finite energy condition over a metrized field}\label{sec:DTV}
Let $f:\mathbb{P}^k\dashrightarrow \mathbb{P}^k$ be a dominant rational map defined over an algebraically closed field $K$ of characteristic zero.  Let $d=$ be the algebraic degree of $f$. Let $1\leq s\leq k-1$ be such that $\dim I_f=k-s-1$ where $I_f$ is the indeterminacy locus of $f$. Assume that for all $q\leq s$,  \[(\lambda_{q}(f))^n=\left((f^n)^*(\mathscr{O}_{\mathbb{P}^k}(1)^{q})\cdot \mathscr{O}_{\mathbb{P}^k}(1)^{k-q}\right)= d^{n q}.\]
\begin{definition}
Let $X\subsetneq \mathbb{P}^k$ be a closed subvariety. We say $X$ is $f$-\emph{good} if 
\[\bigcup_{n\geq0}f^n(X)\cap I_f=\varnothing.\]
\end{definition} 
Observe that if $X$ is $f$-good, then necessarily, $\dim X + \dim I_f \leq k-1$ so $\dim X \leq s$. 
Assume in addition that $(K,|\cdot|)$ is complete. Let $F:\mathbb{A}^{k+1}\to\mathbb{A}^{k+1}$ be a polynomial lift of $f$ defined over $K$, i.e. $F=(F_0,\ldots,F_k)$ with $F_i\in K[X_0,\ldots,X_k]$ homogeneous of degree $d$ with $\pi \circ F= f\circ \pi$ and $I_f=\pi(F^{-1}\{0\})$, where $\pi:\mathbb{A}^{k+1}\setminus\{0\}\to\mathbb{P}^k$ is the canonical projection. We define
\[\varphi_f(x):=\frac{1}{d}\log\|F(p)\|-\log\|p\|,\]\\
for all $x\in (\mathbb{P}^k\setminus I_f)(K)$ and $p\in\mathbb{A}^{k+1}(K)\setminus\{0\}$ with $\pi(p)=x$. If we equip $\mathscr{O}_{\mathbb{P}^k}(1)$ with a model metric
$\|\cdot\|_0$, we define a (singular) metric on $\mathscr{O}_{\mathbb{P}^k}(1)$ by letting $\|\cdot\|_f:=\|\cdot\|_0e^{-\varphi_f}$.
The singularities of the metric $\|\cdot\|_f$ are contained in $I_f$ and, for any closed subvariety $X\subsetneq \mathbb{P}^k$ with $X\cap I_f=\varnothing$, the line bundle $(\frac{1}{d}f^*\mathscr{O}_{\mathbb{P}^k}(1)-\mathscr{O}_{\mathbb{P}^k}(1))|_X$ is nothing but the trivial bundle $\mathscr{O}_X$ on $X$ and $\|\cdot\|_f$ is a model metric on $L:=\mathscr{O}_{\mathbb{P}^k}(1)|_X$.

\begin{definition}\label{def_metrization}
Let $X\subsetneq \mathbb{P}^k$ be an $f$-good closed subvariety of dimension $q$ and let $L_n:=\mathscr{O}_{\mathbb{P}^k}(1)|_{f^n(X)}$ and $\|\cdot\|_{f,n}$ and $\|\cdot\|_n$ be the metrizations on $L_n$ induced by $\|\cdot\|_f$ and $\|\cdot\|$ respectively. We say $(f,X)$ satisfies the \emph{finite energy condition} $(\mathrm{E})$ if 
\begin{align}
\sum_{n=0}^\infty \frac{1}{d^{n(q+1)}}\mathrm{E}_{f^n(X)}(L_n,\|\cdot\|_{f,n},\|\cdot\|_n)>-\infty.\tag{$\mathrm{E}$}
\end{align}
\end{definition}

\section{Finite energy condition for maps defined over a number  field}
%
%

\subsection{A general result for dominant rational maps}
Let now $f:\mathbb{P}^k\dashrightarrow \mathbb{P}^k$ be a dominant rational map of degree $d$ defined over a number field $\mathbb{K}$. Let $1\leq s\leq k-1$ be such that $\dim I_f=k-s-1$ where $I_f$ is the indeterminacy locus of $f$. Assume that for all $q\leq s$,  \[(\lambda_{q}(f))^n=\left((f^n)^*(\mathscr{O}_{\mathbb{P}^k}(1)^{q})\cdot \mathscr{O}_{\mathbb{P}^k}(1)^{k-q}\right)= d^{n q}.\]
For any place $v\in M_\mathbb{K}$, we let
$\|\cdot\|_{f,v}$ and $\varphi_{f,v}$ be defined as in $§$~\ref{sec:DTV}. The metric $\|\cdot\|_{f,v}$ induces a singular metric on $\mathscr{O}_{\mathbb{P}^k}(1)^\mathrm{an}_v$ with singular locus exactly $I_{f,v}^{\mathrm{an}}$ and the function $\varphi_{f,v}$ extends as a continuous function on $\mathbb{P}^{k,\mathrm{an}}_{v}\setminus I_{f,v}^{\mathrm{an}}$.

The main result of this section is the following version of \cite[Theorem 5.1]{Jonsson-Reschke} to our case, in which we use the notations of Definition~\ref{def_metrization} and add a $v$ to precise the dependence on the choice of the place $v \in M_\mathbb{K}$. 

\begin{theorem}\label{tm:DTV-global}
Let $X\subsetneq \mathbb{P}^k$ be an $f$-good subvariety defined over $\mathbb{K}$. There is a constant $C\geq0$ such that for any place $v\in M_\mathbb{K}$, we have
\begin{align*}
\sum_{n=0}^\infty \frac{1}{d^{n(q+1)}}\mathrm{E}_{f^n(X)}(L_n,\|\cdot\|_{f,n,v},\|\cdot\|_{n,v})\geq -\frac{[\mathbb{K}:\mathbf{k}]\deg(X)}{n_{v}}\left(h_{\mathrm{nv}}(X)+C\right).
\end{align*}
In particular, the pair $(X,f)$ satisfies the finite energy condition $(\mathrm{E})$ over $\mathbb{C}_v$. 
\end{theorem}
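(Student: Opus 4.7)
The strategy is to combine the global adelic identity from Lemma~\ref{lm:energy} with a telescoping formula over the iterates $Y_n := f^n(X)$ and a uniform local upper bound on the quasi-potentials $\varphi_{f,w}$, then to isolate the contribution at the fixed place $v$. Applying Lemma~\ref{lm:energy} to $Y_n$ of dimension $q$ with $L_n = \mathscr{O}_{\mathbb{P}^k}(1)|_{Y_n}$ endowed with the two adelic semipositive metrics $\{\|\cdot\|_w\}_w$ and $\{\|\cdot\|_{f,w}\}_w$ (both continuous on $Y_n^{\mathrm{an}}$ by $f$-goodness), and using $f^*\mathscr{O}_{\mathbb{P}^k}(1)\sim\mathscr{O}_{\mathbb{P}^k}(d)$ together with the projection formula for $f|_{Y_n}\colon Y_n\to Y_{n+1}$, one computes
\[\frac{1}{q+1}\bigl((\bar L_{f,n}^{q+1}-\bar L_n^{q+1})\cdot Y_n\bigr)=[\mathbb{K}:\mathbb{Q}]\deg(Y_n)\Bigl(\tfrac{1}{d}h_{\mathrm{nv}}(Y_{n+1})-h_{\mathrm{nv}}(Y_n)\Bigr).\]
Under the dynamical-degree hypothesis (and birationality giving $\deg(f^n|_X)=1$), one has $\deg(Y_n)=d^{nq}\deg(X)$; dividing by $d^{n(q+1)}$ and telescoping over $n=0,\ldots,N-1$ yields
\[\sum_{n=0}^{N-1}\sum_{w\in M_\mathbb{K}}n_w\frac{\mathrm{E}_{Y_n,w}}{d^{n(q+1)}}=[\mathbb{K}:\mathbb{Q}]\deg(X)\Bigl(\frac{h_{\mathrm{nv}}(Y_N)}{d^N}-h_{\mathrm{nv}}(X)\Bigr),\]
and the non-negativity of $h_{\mathrm{nv}}$ on effective subvarieties of $\mathbb{P}^k$ bounds this global sum below by $-[\mathbb{K}:\mathbb{Q}]\deg(X)h_{\mathrm{nv}}(X)$.

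Next I would show that $\varphi_{f,w}\leq C_w$ on $\mathbb{P}^{k,\mathrm{an}}_{w}\setminus I_{f,w}^{\mathrm{an}}$ at every place, with $C_w=0$ at every non-archimedean place of good reduction of a fixed homogeneous lift $F$ of $f$: this is immediate from $\|F(p)\|_w\leq (\text{combinatorial factor})\cdot\max|F_{i,\alpha}|_w\cdot\|p\|_w^d$, and the resulting sum $\sum_w n_w C_w$ is finite via the product formula, depending only on $f$. Setting $\phi_w=-\varphi_{f,w}\geq -C_w$ and noting that the mixed Chambert-Loir measures $c_1(\bar L_{f,n})_w^j\wedge c_1(\bar L_n)_w^{q-j}$ on $Y_n^{\mathrm{an}}$ each have total mass $\deg(Y_n)$ (by the same $f^*L\sim dL$ computation), I get the uniform local lower bound $\mathrm{E}_{Y_n,w}\geq -C_w\deg(Y_n)$, so that the contributions at places $w\neq v$ combine into
\[-\sum_{n\geq 0}\sum_{w\neq v}n_w\frac{\mathrm{E}_{Y_n,w}}{d^{n(q+1)}}\leq \frac{d}{d-1}\deg(X)\sum_w n_w C_w.\]

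Isolating the $w=v$ term in the telescoped identity and combining the two previous bounds gives
\[n_v\sum_{n=0}^{\infty}\frac{\mathrm{E}_{Y_n,v}}{d^{n(q+1)}}\geq -[\mathbb{K}:\mathbb{Q}]\deg(X)\bigl(h_{\mathrm{nv}}(X)+C\bigr),\]
with $C=\frac{d}{(d-1)[\mathbb{K}:\mathbb{Q}]}\sum_w n_w C_w$ depending only on $f$, and dividing by $n_v$ yields the claimed inequality. The finite energy condition $(\mathrm{E})$ over $\mathbb{C}_v$ is immediate since the right-hand side is finite. The principal difficulty lies on the non-archimedean side of the estimates: one has to extend $\varphi_{f,w}$ continuously to the full Berkovich space while preserving the uniform upper bound, identify the total mass of the mixed Chambert-Loir products on $Y_n^{\mathrm{an}}$ with the geometric degree $\deg(Y_n)$, and, in the dominant-but-not-birational situation, control the restricted local degrees $\deg(f|_{Y_n})$ under the hypothesis $\lambda_q(f)^n=d^{nq}$ so that the telescoping still goes through.
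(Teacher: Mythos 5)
Your overall strategy—telescoping the arithmetic intersection numbers along the orbit, decomposing each difference into a sum of local mutual energies by Lemma~\ref{lm:energy}, bounding the global sum below by $-[\mathbb{K}:\mathbb{Q}]\deg(X)h_{\mathrm{nv}}(X)$ using the non-negativity of the naive height, and then isolating the contribution at the place $v$ via a uniform local estimate on $\varphi_{f,w}$—is exactly the architecture of the paper's proof, and the telescoping identity you compute is correct (the paper writes it as $(\bar L_{f,n}^{q+1}|f^n(X))=d^{-(q+1)}(\bar L_{n+1}^{q+1}|f^{n+1}(X))$, which after dividing by $d^{n(q+1)}$ is your height recursion).

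The gap is in the sign of the uniform local estimate, and it is not cosmetic: it breaks the isolation step. You set $\phi_w=-\varphi_{f,w}\geq -C_w$ and deduce $\mathrm{E}_{Y_n,w}\geq -C_w\deg(Y_n)$, a \emph{lower} bound. But from the telescoped identity you have
\[
n_v\sum_{n}\frac{\mathrm{E}_{Y_n,v}}{d^{n(q+1)}}\;\geq\;-[\mathbb{K}:\mathbb{Q}]\deg(X)h_{\mathrm{nv}}(X)\;-\;\sum_{w\neq v}\sum_n n_w\frac{\mathrm{E}_{Y_n,w}}{d^{n(q+1)}},
\]
so to get a lower bound on the $v$-term you must bound the sum over $w\neq v$ \emph{from above}, i.e. you need $\mathrm{E}_{Y_n,w}\leq C_w\deg(Y_n)$. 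Your lower bound gives precisely the wrong direction, and the inequality you write in the last step does not follow from the two bounds you have established. A second symptom of the same sign error: if $\mathrm{E}_{Y_n,w}\geq -C_w\deg(Y_n)$ held, then $\sum_n d^{-n(q+1)}\mathrm{E}_{Y_n,w}\geq -\tfrac{d}{d-1}C_w\deg(X)>-\infty$ at every place individually, so the finite energy condition $(\mathrm{E})$ would be trivially true and the entire adelic argument would be superfluous. The paper's operative estimate is $\varphi_{f,v}\leq C_v$ hence $\mathrm{E}_{f^n(X),v}\leq C_v\cdot d^{nq}\deg(X)$ (an upper bound), which is what allows one to throw away the places $w\neq v$ at a bounded cost. (You inherited the sign slip from a genuine inconsistency in the paper's stated definition of mutual energy versus Lemma~\ref{lm:energy} and the metric $\|\cdot\|_f=\|\cdot\|_0e^{-\varphi_f}$; the working convention, consistent with $\eqref{energy_higher2}$ and the proof of Theorem~\ref{tm:good-dynamics}, is that the test function in the energy is $\varphi_f$, not $-\varphi_f$.) Fix the sign of $\phi_w$ to $\varphi_{f,w}$, deduce $\mathrm{E}_{Y_n,w}\leq C_w\deg(Y_n)$, and the rest of your argument closes as intended.
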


\begin{proof}
As $X$ is $f$-good, we must have $q:=\dim X \leq s$. In particular, the degree of $(f^n)(X)$, counted with multiplicity, can be computed  by 
 \begin{equation}\label{eq_degree_iterate}
 \left( f^n(X)\cdot \mathscr{O}_{\mathbb{P}^k}(1)^{q} \right)= \left( X\cdot (f^n)^*(\mathscr{O}_{\mathbb{P}^k}(1)^{q}) \right) = d^{n q} \deg(X).
\end{equation}
 Fix an integer $n\geq0$ and recall that $L_{n}:=\mathscr{O}_{\mathbb{P}^k}(1)|_{f^n(X)}$. Let $\bar{L}_n=(L_n,\{\|\cdot\|_{n,v}\}_{v\in M_\mathbb{K}})$ be the semi-positive adelic ample line bundle induced by $\bar{\mathscr{O}}_{\mathbb{P}^k}(1)$ on $f^n(X)$.

Note that, since $X$ is $f$-good, the collection $\{\|\cdot\|_{f,n,v}\}_{v\in M_\mathbb{K}}$ of singular metrics on $\mathscr{O}_{\mathbb{P}^k}(1)$ induces a model metric on $L_n$. We denote by $\bar{L}_{f,n}$ the induced adelically metrized line bundle. By construction, we have $f^*\bar{L}_{n+1}=d\bar{L}_{f,n}$ and
\begin{align*}
(\bar{L}_{f,n}^{q+1}|f^n(X)) & =d^{-(q+1)}((f^*\bar{L}_{n+1})^{q+1}|f^n(X))=d^{-(q+1)}(\bar{L}_{n+1}^{q+1}|f^{n+1}(X)).
\end{align*}

We thus find
\begin{align*}
I_N & :=\sum_{n=0}^N\frac{1}{(q+1)d^{n(q+1)}}\left((\bar{L}_{f,n}^{q+1}|f^n(X))-(\bar{L}_n^{q+1}|f^n(X))\right)\\
 & =\sum_{n=0}^N\frac{1}{(q+1)d^{n(q+1)}}\left(\frac{1}{d^{q+1}}(\bar{L}_{n+1}^{q+1}|f^{n+1}(X))-(\bar{L}_n^{q+1}|f^n(X))\right)\\
& = \frac{1}{(q+1)d^{(q+1)(N+1)}}(\bar{L}_{N+1}^{q+1}|f^{N+1}(X))-\frac{1}{q+1}(\bar{L}_0^{q+1}|X)\\
&\geq-[\mathbb{K}:\mathbb{Q}]\deg(X)h_{\bar{L}_0}(X),
\end{align*}
since $(\bar{L}_{N+1}^{q+1}|f^{N+1}(X))=(q+1)[\mathbb{K}:\mathbb{Q}]\deg(f^{N+1}(X))h_{\bar{L}_{N+1}}(X)\geq0$.
According to Lemma~\ref{lm:energy}, for any $n\geq0$,
\begin{align*}
\frac{1}{q+1}\left((\bar{L}_{f,n}^{q+1}|f^n(X))-(\bar{L}_n^{q+1}|f^n(X))\right)=\sum_{v\in M_\mathbb{K}}n_v\cdot \mathrm{E}_{f^n(X),v}(L_n,\|\cdot\|_{f,n,v},\|\cdot\|_{n,v}).
\end{align*}
For any $N\geq0$, this leads to
\begin{align}
\sum_{n=0}^N\sum_{v\in M_\mathbb{K}}\frac{n_v}{d^{n(q+1)}} \mathrm{E}_{f^n(X),v}(L_n,\|\cdot\|_{f,n,v},\|\cdot\|_{n,v})\geq -[\mathbb{K}:\mathbb{Q}]\deg(X)h_{\bar{L}_0}(X).\label{eq:decomposition}
\end{align}
We now recall that, by the (strong) triangular inequality, for any $v\in M_\mathbb{K}$, there is $C_v\geq0$ such that
\begin{align}
\varphi_{f,v}\leq C_v \quad \text{on} \ \ (\mathbb{P}^k\setminus I_f)(\mathbb{C}_v),\label{ineg:trivial}
\end{align}
with $C_v=0$ for all but finitely many $v\in M_\mathbb{K}$, see, e.g., \cite{Silverman,Jonsson-Reschke}. As a consequence, for all $n\geq0$ and all $v\in M_\mathbb{K}$
\[\mathrm{E}_{f^n(X),v}(L_n,\|\cdot\|_{f,n,v},\|\cdot\|_{n,v})\leq C_v \cdot \deg_{L_n}(f^n(X))= C_v \cdot d^{nq}\deg(X),\]
where we used \eqref{eq_degree_iterate}.
Set $S:=\{v\in M_\mathbb{K}\, : C_v\neq0\}\subset M_\mathbb{K}$. For all $v\notin S$, we get $\mathrm{E}_{f^n(X);v}(L,\|\cdot\|_{f,n,v},\|\cdot\|_{n,v})\leq 0$.
Finally, we pick a place $v_0\in M_\mathbb{K}$ and $n\geq0$. By \eqref{eq:decomposition} and \eqref{ineg:trivial}, we have
\begin{align*}
\sum_{n=0}^N\frac{n_{v_0}d^{-n(q+1)}}{[\mathbb{K}:\mathbb{Q}]\deg(X)}\mathrm{E}_{f^n(X),v_0}(L_n,\|\cdot\|_{f,n,v_0},\|\cdot\|_{n,v_0}) & \geq -h_{\bar{L}_0}(X)-\sum_{v\in M_{\mathbb{K}}\setminus\{v_0\}}\sum_{n=0}^N\frac{n_v}{d^n}C_v\\
&\geq -h_{\bar{L}_0}(X)-\frac{dC}{d-1},
\end{align*}
where $C:=\sum_{v\in M_\mathbb{K}}n_vC_v=\sum_{v\in S}n_vC_v$, since $C_v\geq0$ for all $v\in M_\mathbb{K}$ and $C_v=0$ for all $v\notin S$. Since $\bar{L}_0=\mathscr{O}_{\mathbb{P}^k}(1)|_X$ and since this metrization induces the naive height, this concludes the proof.
\end{proof}
\subsection{Finite energy and canonical heights for birational maps}
In this section, we let $f:\mathbb{P}^k\dashrightarrow\mathbb{P}^k$ be a birational map defined over a number field $\mathbb{K}$ satisfying the improved algebraic stability assumption \eqref{AS} and \eqref{gooddim}. 

\begin{proof}[Proof of Theorem~\ref{tm:good-dynamics}]
Fix a complex place $v\in M_\mathbb{K}$. 
As $f$ satisfies \eqref{AS}, $(I_{f^{-1}},f)$ is a good pair. In particular, we can apply Theorem~\ref{tm:DTV-global} over $\mathbb{C}_v$, which means
\begin{align*}
	\sum_{n=0}^\infty \frac{1}{d^{ns}}\mathrm{E}_{f^n(I_{f^{-1}})}(L_n,\|\cdot\|_{f,n},\|\cdot\|_n)>-\infty,
\end{align*} 
with
\[\mathrm{E}_{f^n(I_{f^{-1}})}(L_n,\|\cdot\|_{f,n},\|\cdot\|_n):=\frac{1}{s}\sum_{j=0}^{s-1}\int_{f^n(I_{f^{-1}})}\varphi \cdot c_1(L_n,\|\cdot\|_{f,n})^j\wedge c_1(L_n,\|\cdot\|_n)^{s-1-j},\]
where $L_{n}=\mathscr{O}_{\mathbb{P}^k}(1)|_{f^n(I_{f^{-1}})}$, $\|\cdot\|_n$ is the naive metric $\|\cdot\|_0$ on $L_{n}$, and $\|\cdot\|_{f,n}$ is the metric $\|\cdot\|_0e^{-\varphi_{f}}$ on $L_n$ with \[\varphi_f(x)=\frac{1}{d}\log\|F(p)\|-\log\|p\|\]
for some lift $F$ of $f$. The finiteness of the sum implies (and is in fact equivalent) to the finiteness of the sum for $j=s-1$, see~\cite[Proof of Theorem~3.2.1]{ThelinVigny1}, and $\varphi_f$ is indeed a quasi-potential of $d^{-1}f^*(\omega)$. Using that $c_1(L_n,\|\cdot\|_{f,n})$ is the restriction of $d^{-1} f^*(\omega)$ to $f^n(I_{f^{-1}})$ implies the first part of \eqref{energy_higher2}. Finally, working with $(I_f,f^{-1})$ implies $f$ satisfies \eqref{energy_higher2}.

The fact that its Green measure $\mu_f$ is well-defined, mixing, hyperbolic, and has maximal entropy $s\log d$ is an immediate consequence of \cite[Theorems~4~\&~5]{ThelinVigny1}.
\end{proof}
The set of points with a well defined grand orbit is
\[\mathbb{P}_f^k:=\mathbb{P}^k\setminus\left(\bigcup_{n\geq0}f^n(I_{f^{-1}})\cup f^{-n}(I_f)\right).\]
We prove here the following.

\begin{proposition}\label{prop:canonical-height}
Assume $f$ satisfies assumption \eqref{AS} and \eqref{gooddim}. There exist canonical height functions $\widehat{h}_f^+,\widehat{h}_f^-:\mathbb{P}^k_f(\bar{\mathbb{Q}})\to\mathbb{R}_+$ such that 
\begin{center}
 $\widehat{h}_f^+(f(x))=d\widehat{h}_f^+(x)$ and $\widehat{h}_f^-(f^{-1}(x))=\delta\widehat{h}_f^-(x)$ for all $x\in \mathbb{P}_f^k(\bar{\mathbb{Q}})$.
 \end{center}
 In particular, if $x\in\mathbb{P}^k(\bar{\mathbb{Q}})$ is periodic, then $\widehat{h}_f^+(x)=\widehat{h}_f^-(x)=0$.

Moreover, if we assume there exists a constant $C\in\mathbb{R}$ such that 
\begin{align}
\frac{1}{d} h(f(x))+ \frac{1}{\delta} h(f^{-1}(x)) \geq \left( 1+ \frac{1}{d\delta}\right) h(x)+C, \quad x\in \mathbb{P}^k(\bar{\mathbb{Q}})\setminus(I_f(\bar{\mathbb{Q}})\cup I_{f^{-1}}(\bar{\mathbb{Q}})),\label{eq:bonneineg}
\end{align}
then there is a sequence of positive numbers $(\epsilon_n)_n$ such that $\epsilon_n\to0$ as $n\to\infty$ and 
\begin{equation}\label{cequonveut?}
\frac{1}{d^n}h\circ f^n(x)+\frac{1}{\delta^n} h \circ f^{-n} (x)  \leq  \hat{h}^+_f(x)+ \hat{h}^-_f(x)+ \epsilon_n, \quad x\in\mathbb{P}_f^k(\bar{\mathbb{Q}}).
\end{equation}
\end{proposition}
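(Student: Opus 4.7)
The construction of $\hat h_f^{\pm}$ under only \eqref{AS} and \eqref{gooddim} is via Tate-style limsups:
\[\hat h_f^+(x) := \limsup_{n\to\infty}\frac{h(f^n(x))}{d^n}, \qquad \hat h_f^-(x) := \limsup_{n\to\infty}\frac{h(f^{-n}(x))}{\delta^n}.\]
For $x\in\mathbb{P}_f^k(\bar{\mathbb{Q}})$, every iterate $f^n(x)$ ($n\in\mathbb{Z}$) is a well-defined point of $\mathbb{P}^k\setminus(I_f\cup I_{f^{-1}})$, so the limsups make sense. Finiteness comes from the elementary estimate $h\circ f\leq d\cdot h + C_1$ on $\mathbb{P}^k\setminus I_f$ (immediate from the homogeneity of the polynomials defining $f$), which iterates along the forward orbit to yield $h(f^n(x))/d^n\leq h(x)+C_1/(d-1)$; nonnegativity is automatic. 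The functional equation $\hat h_f^+(f(x))=d\cdot\hat h_f^+(x)$ is tautological after reindexing (using the $f$-invariance of $\mathbb{P}_f^k$, itself a consequence of \eqref{AS}), and then $f^N(x)=x$ forces $\hat h_f^+(x)=d^N\hat h_f^+(x)$, hence $\hat h_f^+(x)=0$; the statements for $\hat h_f^-$ are obtained by swapping $f\leftrightarrow f^{-1}$ and $d\leftrightarrow\delta$.

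For \eqref{cequonveut?} under the extra hypothesis \eqref{eq:bonneineg}, the plan is to upgrade the limsups to genuine limits with summable tails. Set $a_n:=h(f^n(x))/d^n$ and $b_n:=h(f^{-n}(x))/\delta^n$. Applying \eqref{eq:bonneineg} at $y=f^n(x)$ and dividing by $d^n$ yields, for $n\geq1$,
\[a_{n+1}-a_n\ \geq\ \frac{1}{d\delta}(a_n-a_{n-1}) - \frac{|C|}{d^n},\]
which, combined with the trivial upper bound $a_{n+1}-a_n\leq C_1/d^{n+1}$, forces the negative part $D_n^-:=\max(0,\,a_n-a_{n+1})$ to obey the one-sided contracting recursion $D_n^-\leq \frac{1}{d\delta}D_{n-1}^- + |C|/d^n$. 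Iterating gives $D_n^-\leq D_0^-/(d\delta)^n + O(1/d^n)$; since $D_0^-\leq h(x)$, the series $\sum_n|a_{n+1}-a_n|$ converges. Hence $a_n$ is Cauchy, its limit is forced to be $\hat h_f^+(x)$, and one obtains a tail bound of the form $|a_n-\hat h_f^+(x)|\leq O\bigl(h(x)/(d\delta)^n + 1/d^n\bigr)$. The symmetric argument handles $b_n$, and summing the two estimates delivers \eqref{cequonveut?} with $\epsilon_n\to 0$.

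The conceptual content, and the main obstacle disguised as a calculation, is that \eqref{eq:bonneineg} is precisely the missing ingredient to run Tate's telescoping past the indeterminacy locus: the pointwise reverse estimate $h\circ f\geq d\cdot h - O(1)$ is false near $I_f$, but \eqref{eq:bonneineg} repackages it as a \emph{second-order} inequality whose contraction factor $1/(d\delta)<1$ geometrically damps any possible large drop of $a_n-a_{n-1}$, so Cauchy-ness is recovered. Everything else reduces to checking that $\mathbb{P}_f^k$ is $f^{\pm1}$-invariant, so that the applications of \eqref{eq:bonneineg} along the full grand orbit are legitimate, and to a routine bookkeeping of geometric series producing the decay sequence $\epsilon_n$.
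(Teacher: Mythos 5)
The construction and basic properties of $\widehat{h}_f^\pm$ in your first paragraph are exactly the paper's: Tate-type $\limsup$s, boundedness from the elementary upper bound $h\circ f\leq d\cdot h+O(1)$ off $I_f$, the functional equation by reindexing, and vanishing on periodic points. That part is fine.

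Your treatment of \eqref{cequonveut?} has a genuine gap, and it is hidden precisely in the sentence ``since $D_0^-\leq h(x)$''. Your recursion $D_n^-\leq \frac{1}{d\delta}D_{n-1}^- + |C|/d^n$ is correct, and iterating gives a tail bound of the shape
\[
a_n-\widehat{h}_f^+(x)\ \leq\ \sum_{m\geq n}D_m^-\ \leq\ O\!\left(\frac{D_0^-}{(d\delta)^n}\right)+O\!\left(\frac{1}{d^n}\right),\qquad D_0^-\leq h(x),
\]
and symmetrically for $b_n$. But this produces an error term containing $h(x)/(d\delta)^n$, i.e.\ an $\epsilon_n$ that depends on $x$ through the \emph{naive} height. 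The Proposition asserts the existence of a sequence of positive numbers $\epsilon_n\to0$, uniform in $x$, and this uniformity is not a cosmetic detail: in Proposition~\ref{prop:quasi-height} the inequality \eqref{cequonveut?} is applied to sequences $(F_i)_i$ of periodic points and one needs $\limsup_i h_{\bar{L}_n}(\psi_n^{-1}(F_i))\leq \epsilon_n+O(\min\{d,\delta\}^{-n})\to 0$. For periodic $x$ one does have $\widehat{h}_f^\pm(x)=0$, but $h(x)$ is unbounded along any generic sequence of periodic points, so an $x$-dependent factor $h(x)/(d\delta)^n$ destroys the quasi-smallness estimate: for each fixed $n$, $\limsup_i$ over $F_i$ would already be infinite.

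The source of the loss is that you decouple the forward and backward orbits. The hypothesis \eqref{eq:bonneineg} applied at $y=x$ controls the \emph{sum} $a_1+b_1\geq(1+\tfrac{1}{d\delta})h(x)+C$; it does not prevent $a_1$ from dropping by an amount comparable to $h(x)$ as long as $b_1$ compensates, and this is exactly what feeds the $D_0^-\leq h(x)$ bound. The paper, following Kawaguchi, keeps the coupling: after the affine shift $h'=h+\kappa$ turning \eqref{eq:bonneineg} into the homogeneous form \eqref{eq:Lee}, it works directly with $h'_n:=\frac{1}{d^n}h'\circ f^n+\frac{1}{\delta^n}h'\circ f^{-n}$ and proves by induction that $h'_n\geq\frac{c_n}{c_{n-1}}h'_{n-1}$ with $c_n=1+(d\delta)^{-n}$, so the coupled quantity is essentially monotone increasing. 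There is no analogue of $D_0^-$: the base case of the induction is literally $h'_1\geq c_1 h'_0$, which rules out any initial drop. Passing to the limit then yields $h'_n\leq c_n\limsup_m h'_m\leq c_n(\widehat{h}_f^++\widehat{h}_f^-)$, whence \eqref{cequonveut?} with an error controlled by $|\kappa|(d^{-n}+\delta^{-n})$ and the harmless factor $c_n\to1$. If you want to salvage your telescoping approach, you must run it on the coupled sequence $h'_n$ rather than on $a_n$ and $b_n$ separately; as written, the estimate you obtain is strictly weaker than what the Proposition claims and than what the later applications need.
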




%
%
%
%
%
%
%

\begin{proof}
As before, we use that $\max\{\varphi_{f,v},\varphi_{f^{-1},v}\}\leq C_v$ on $\mathbb{P}_f^k(\bar{\mathbb{Q}})$ where $C_v=0$ for all but finitely many $v\in M_\mathbb{K}$. We deduce that 
\[\max\{\varphi_{f,v}\circ f^n,\varphi_{f^{-1},v}\circ f^{-n}\}\leq C_v \quad \text{on} \ \mathbb{P}_f^k(\bar{\mathbb{Q}}).\]
In particular, if $C_1$ is the constant $C_1:=\sum_{v\in M_\mathbb{K}}n_v\cdot C_v\in\mathbb{R}_+$, summing over all places and over all Galois conjugates of a point $x\in \mathbb{P}_f^k(\bar{\mathbb{Q}})$, we find
\begin{align}\label{ineg:good-height-ineg}
\left\{\begin{array}{ll}
\displaystyle\frac{1}{d^{n+1}}h_{\mathrm{nv}}(f^{n+1}(x))-\frac{1}{d^{n}}h_{\mathrm{nv}}(f^{n}(x))\leq \frac{1}{d^n}C_1,  & \text{and}\\
\displaystyle\frac{1}{\delta^{n+1}}h_{\mathrm{nv}}(f^{-n-1}(x))-\frac{1}{\delta^n}h_{\mathrm{nv}}(f^{-n}(x))\leq \frac{1}{\delta^n}C_1,&
\end{array}\right.
\end{align}
where $C_1$ is independent of $x\in \mathbb{P}_f^k(\bar{\mathbb{Q}})$ and of $n\geq0$. As in Kawaguchi \cite{Kawaguchi-Henon}, we deduce, following the arguments of \cite{CS-height}, that
the limits
\[\widehat{h}_f^+:=\limsup_{n\to\infty}\frac{1}{d^n}h_{\mathrm{nv}}\circ f^n \quad \text{and} \quad \widehat{h}_f^-:=\limsup_{n\to\infty}\frac{1}{\delta^n}h_{\mathrm{nv}}\circ f^{-n}\]
are well-defined functions $\widehat{h}_f^\pm:\mathbb{P}^k_f(\bar{\mathbb{Q}})\to\mathbb{R}_+$ and satisfy $\widehat{h}_f^+\circ f=d\widehat{h}_f^+$ and $\widehat{h}_f^-\circ f^{-1}=\delta \widehat{h}_f^-$. 


\medskip

We now assume \eqref{eq:bonneineg} holds. We again follow ideas of \cite{Kawaguchi-Henon}. Let $D:=d\delta$ and $h':= h + \kappa$ where $\kappa= \frac{-CD}{D+1-d-\delta}$ is a constant chosen so that \eqref{eq:bonneineg} rephrases as
\begin{align}
\frac{1}{d} h'(f(x))+ \frac{1}{\delta} h'(f^{-1}(x)) \geq \left( 1+ \frac{1}{D}\right) h'(x).\label{eq:Lee}
\end{align} 
For $n\in \N^*$ and $x\in \mathbb{P}_f^k$, let us denote by $h'_n(x)$ the quantity 
\[h'_n(x):=\frac{1}{d^n} h'(f^n(x))+ \frac{1}{\delta^n} h'(f^{-n}(x)).\]
 We write $h'_0=h'$ for $n=0$. We let $c_n:= (D^n+1)/D^n$ for $n\geq 1$ and $c_0=1$. 
We shall prove by induction on $n$ that 
\[ h'_n \geq \frac{c_n}{c_{n-1}} h'_{n-1}\]
whenever all these quantities are well defined. The step $n=1$ is  \eqref{eq:Lee}. Assume now the inequality holds for some $n$ and compose \eqref{eq:Lee} with $f^n$ and $f^{-n}$:
\begin{align*}
	\frac{1}{d^{n+1}} h'(f^{n+1}(x))+ \frac{1}{\delta d^n} h'(f^{n-1}(x)) \geq c_1 \frac{1}{d^{n}} h'(f^n(x)) \\
	\frac{1}{d \delta^{n}} h'(f^{-(n-1)}(x))+ \frac{1}{\delta^{n+1}} h'(f^{-(n+1)}(x)) \geq c_1 \frac{1}{\delta^{n}} h'(f^{-n}(x)). 
\end{align*}
Summing we recognize
\begin{align*}
	h'_{n+1}+ \frac{1}{D} h'_{n-1} \geq c_1 h'_n.
\end{align*}
Using the induction hypothesis gives $h'_{n+1}+ \frac{c_{n-1}}{Dc_n} h'_{n} \geq c_1 h'_n$ so $h'_{n+1} \geq \left(c_1-\frac{c_{n-1}}{Dc_n} \right) h'_n= \frac{c_{n+1}}{c_n} h'_n$ by a straightforward computation. So $h'_n \geq \frac{c_n}{c_{n-1}} h'_{n-1}$ holds for all $n$. Multiplying that inequality for $m \geq n+1$ and passing to the limit give
\[\limsup h'_m \geq c_n h'_{n}.\]
Recall that $\hat{h}^+_f= \limsup d^{-n} h\circ f^n $ and $\hat{h}^-_f= \limsup \delta^{-n} h\circ f^{-n} $ so, replacing $h'_n$ by $d^{-n}h\circ f^n+\delta^{-n} h \circ f^{-n} +\kappa(d^{-n}+\delta^{-n})$ implies \eqref{cequonveut?}
for points in $\mathbb{P}_f^k$. 
\end{proof}
\section{Distribution of generic periodic points of birational maps}

\subsection{Arithmetic equidistribution for quasi-heights}
In this section, we let $X$ be a projective variety of dimension $k$ defined over a number field $\mathbb{K}$ and we fix a place $v\in M_\mathbb{K}$. For any $n\geq0$, we let $\psi_n:X_n\to X$  be a birational morphism and we let $L_n$  be a big and nef $\mathbb{Q}$-line bundle on $X_n$ endowed with a semi-positive adelic continuous metrization $\bar{L}_n$. We assume that
\begin{enumerate}
\item the sequence $\mathrm{vol}(L_n)$ converges to  constant $\mathrm{V}>0$ and the sequence of probability measures $(\mathrm{vol}(L_n)^{-1}(\psi_n)_*c_1(\bar{L}_n)^k_v)_n$ converges weakly to a probability measure $\mu_v$ on $X_v^\mathrm{an}$,
\item For any ample line bundle $M_0$ on $X$ and any adelic semi-positive continuous metrization $\bar{M}_0$ on $M_0$, there is a constant $C\geq0$ such that 
\[\left(\psi_n^*(\bar{M}_0)\right)^j\cdot \left(\bar{L}_n\right)^{k+1-j}\leq C,\]
for any $2\leq j\leq k+1$ and any $n\geq0$.
\end{enumerate}
\begin{definition}
The data $(X,\mu_v,X_n,\bar{L}_n)$ is a \emph{quasi-height} on $X$ at the place $v$.
\end{definition}

A sequence $(F_i)_i$ of Galois-invariant finite subsets of $X(\bar{\mathbb{Q}})$ is \emph{quasi-small} if $\psi_n^{-1}\{F_i\}$ is a finite subset of $X_n(\bar{\mathbb{Q}})$ for any $n\geq0$ and any $i$ and if the sequence
\[\varepsilon_n(\{F_i\}_i):=\limsup_ih_{\bar{L}_n}(\psi_n^{-1}(F_i))-h_{\bar{L}_n}(X_n)\] satisfies $\limsup_{n\to\infty}\varepsilon_n(\{F_i\})\leq 0$.

\bigskip

The following is proved in \cite{Good-height}:

\begin{theorem}[Equidistribution of quasi-small points]\label{tm:equidistrib}
Let $X$ be a projective variety defined over a number field $\mathbb{K}$, let $v\in M_\mathbb{K}$ and let $(X,\mu_v,X_n,\bar{L}_n)$ be a quasi-height on $X$ at the place $v$. For any quasi-small sequence $(F_m)_m$ of Galois-invariant finite subsets of $X(\bar{\mathbb{Q}})$ such that for any hypersurface $H\subset V$ defined over $\mathbb{K}$, we have
\[\# (F_n\cap H)=o(\# F_n), \quad \text{as} \ n\to+\infty,\]
the probability measure $\mu_{F_m,v}$ on $X_v^\mathrm{an}$ which is  equidistributed on $F_m$ converges to $\mu_v$ in the weak sense of measures, i.e. for any continuous function with compact support $\varphi\in\mathscr{C}^0(X_v^\mathrm{an})$, we have
\[\lim_{m\to\infty}\frac{1}{\# F_m}\sum_{y\in F_m}\varphi(y)=\int_{X_v^\mathrm{an}}\varphi\,\mu_v.\]
\end{theorem}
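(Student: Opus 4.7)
The plan is to adapt Yuan's variational equidistribution scheme \emph{on each model $X_n$} and then pass to the limit $n\to\infty$ using the quasi-height data $(\mathrm{V},\mu_v)$. Fix a test function $\varphi\in\mathscr{C}^0(X_v^{\mathrm{an}})$; by a standard density argument I may assume that $\varphi$ is the difference of two semi-positive model functions, so that for $|t|$ sufficiently small and each $n$ the perturbed metric $\bar{L}_n(t):=\bar{L}_n\otimes\mathscr{O}(t\,\psi_n^*\varphi)$ (modified only at the place $v$) remains a semi-positive continuous adelic metric on $L_n$, with $\bar{L}_n(t)$ still big and nef.

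Expanding the arithmetic self-intersection $(\bar{L}_n(t))^{k+1}$ by multilinearity and applying Chambert-Loir's formula as in the proof of Lemma~\ref{lm:energy}, I get
\begin{align*}
h_{\bar{L}_n(t)}(X_n)=h_{\bar{L}_n}(X_n)+\frac{t\,n_v}{(k+1)[\mathbb{K}:\mathbb{Q}]\mathrm{vol}(L_n)}\int_{X_{n,v}^{\mathrm{an}}}\psi_n^*\varphi\cdot c_1(\bar{L}_n)_v^k+R_n(t),
\end{align*}
where $R_n(t)$ is a polynomial in $t$ of degree at most $k+1$ whose coefficients are arithmetic intersection numbers of the form $(\psi_n^*\bar{M}_0)^j\cdot\bar{L}_n^{k+1-j}$ with $j\geq 2$ (after writing the perturbation as a combination of adelic metrics on ample line bundles pulled back from $X$). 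Hypothesis~(2) makes these coefficients uniformly bounded in $n$, so $|R_n(t)|\leq C\,t^2$ for some constant $C$ independent of $n$ and for $|t|$ small.

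For fixed $n$ and $t$, Zhang's inequality on essential minima gives $e_1(\bar{L}_n(t))\geq h_{\bar{L}_n(t)}(X_n)$. The genericity assumption $\#(F_m\cap H)=o(\#F_m)$ for any hypersurface $H$ ensures that $\psi_n^{-1}(F_m)\subset X_n(\bar{\mathbb{Q}})$ is generic in $X_n$ (its eventual intersection with any proper subvariety is $o(\#F_m)$), hence
\begin{align*}
\liminf_{m\to\infty}\frac{1}{[\mathbb{K}:\mathbb{Q}]\#F_m}\sum_{z\in\psi_n^{-1}(F_m)}h_{\bar{L}_n(t)}(z)\geq h_{\bar{L}_n(t)}(X_n).
\end{align*}
Decomposing $h_{\bar{L}_n(t)}(z)=h_{\bar{L}_n}(z)+\frac{t\,n_v}{[\mathbb{K}:\mathbb{Q}]}\psi_n^*\varphi(z)$, using the quasi-small bound $\limsup_m h_{\bar{L}_n}(\psi_n^{-1}(F_m))-h_{\bar{L}_n}(X_n)\leq\varepsilon_n$, and plugging in the first-variation expansion above, I obtain after rearrangement
\begin{align*}
\liminf_{m\to\infty}\frac{t}{\#F_m}\sum_{y\in F_m}\varphi(y)\geq\frac{t}{(k+1)\mathrm{vol}(L_n)}\int_{X_v^{\mathrm{an}}}\varphi\,(\psi_n)_*c_1(\bar{L}_n)_v^k-\frac{[\mathbb{K}:\mathbb{Q}]}{n_v}\varepsilon_n-Ct^2.
\end{align*}
Letting $n\to\infty$ using hypothesis~(1) and $\limsup_n\varepsilon_n\leq 0$, then dividing by $t>0$ and sending $t\to 0^+$, and finally repeating the argument with $-\varphi$, yields the desired equidistribution statement.

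The main technical obstacle is the uniform control of the remainder $R_n(t)$ in the first-variation expansion: this is exactly the purpose of hypothesis~(2), which keeps the lower-order arithmetic intersection numbers of $\bar{L}_n$ with fixed auxiliary bundles pulled back from $X$ bounded even though the models $X_n$ may grow in complexity. Without this uniform bound one could not commute the limits $m\to\infty$, $n\to\infty$ and $t\to 0$. Everything else reduces to a standard Yuan-type variational argument applied on each fixed $X_n$, combined with the measure-theoretic input $\mathrm{vol}(L_n)^{-1}(\psi_n)_*c_1(\bar{L}_n)_v^k\to\mu_v$ provided by the quasi-height structure.
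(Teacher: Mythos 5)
The paper does not give a proof of this theorem; it is imported from \cite{Good-height} (Theorem~6 there), as indicated by the sentence preceding the statement, so there is no in-house argument to compare against. Your sketch is the right Yuan-type variational scheme and correctly isolates the roles of the two quasi-height conditions: (1) supplies $\mu_v$ and $\mathrm{vol}(L_n)\to\mathrm{V}$, and (2) makes the lower-order terms uniform in $n$ so that the limits $m\to\infty$, $n\to\infty$, $t\to 0$ can be commuted.

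However, there is a genuine gap in the step where you claim that, after writing $\varphi=\phi_1-\phi_2$ with $\phi_i$ semi-positive model functions coming from ample metrized bundles $\bar{M}_i$ on $X$, the perturbed metric $\bar{L}_n(t)$ ``remains a semi-positive continuous adelic metric on $L_n$'' for $|t|$ small, uniformly in $n$. For $t>0$ one has $\bar{L}_n(t)=(\bar{L}_n+t\psi_n^*\bar{M}_1)-t\psi_n^*\bar{M}_2$, and subtracting $t\psi_n^*\bar{M}_2$ does not preserve semi-positivity; moreover $L_n$ is only big and nef (not ample), so there is no strict positivity of $c_1(\bar{L}_n)$ to absorb the perturbation, and even if there were, the admissible range of $t$ would a priori depend on $n$, which is precisely what you cannot afford when commuting the limits. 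The correct mechanism is Yuan's arithmetic Siu inequality, giving a lower bound on the arithmetic volume of $\bar{L}_n(t)$ by $(\bar{L}_n+t\psi_n^*\bar{M}_1)^{k+1}-(k+1)t\,(\bar{L}_n+t\psi_n^*\bar{M}_1)^k\cdot\psi_n^*\bar{M}_2$, from which the essential-minimum lower bound follows for arithmetically big line bundles without any semi-positivity of $\bar{L}_n(t)$; hypothesis~(2) is needed again here to make this estimate uniform in $n$. Relatedly, your bound $|R_n(t)|\leq Ct^2$ needs \emph{two-sided} control on $(\psi_n^*\bar{M}_0)^j\cdot\bar{L}_n^{k+1-j}$, while hypothesis~(2) as stated only gives an upper bound; the matching lower bound must come from the nonnegativity of arithmetic intersection numbers of semi-positive adelic nef line bundles. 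With these points repaired, your outline should give the theorem.
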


\subsection{Dynamical quasi-heights for birational maps} 

We now prove Theorem~\ref{tm:distrib-periodic}, applying Theorem~\ref{tm:equidistrib} above. Let $f:\mathbb{P}^k\dashrightarrow \mathbb{P}^k$ be a birational selfmap of $\mathbb{P}^k$ defined over a number field $\mathbb{K}$ and satisfying \eqref{AS} and \eqref{Lee}. Let $d:=\deg(f)$ and $\delta:=\deg(f^{-1})$. Recall that $d^s=\delta^{k-s}$. 
We now choose an embedding $\mathbb{K}\hookrightarrow\mathbb{C}$, and let $f:\mathbb{P}^k(\mathbb{C})\dashrightarrow \mathbb{P}^k(\mathbb{C})$ be the induced complex birational selfmap. By Theorem~\ref{tm:good-dynamics}, $f$ satisfies the hypothesis of \cite[Theorems~4~\&~5]{ThelinVigny1}. More precisely, the Green currents $T_f^j$ and $T_{f^{-1}}^{\ell}$ are well-defined for $1\leq j\leq s$ and for $1\leq \ell\leq k-s$ and satisfy
\[\lim_{n\to\infty}\frac{1}{d^{nj}}(f^n)^*\omega^j=T_f^j \quad \text{and} \quad  \lim_{n\to\infty}\frac{1}{\delta^{n\ell}}(f^{-n})^*\omega^\ell=T_f^\ell.\]
By \cite[Theorem~3.2.8]{ThelinVigny1}, the above  convergence is in the Hartogs' sense (which means that the super-potentials are almost decreasing to the super-potentials of the limits \cite{DinhSibonysuper}.)
Moreover, the measure $\mu_f:=T_f^s\wedge T_{f^{-1}}^{k-s}$ is mixing (with an exponential speed by \cite{Vigny_decay}) hence ergodic, and of maximal entropy $s\log d>0$. Since the currents $T_f^s$ and $T_{f^{-1}}^{k-s}$ are wedgeable by \cite[Theorem~3.4.1]{ThelinVigny1}, continuity of the wedge product under Hartogs convergence for wedgeable currents (see again \cite[Proposition~4.2.6]{DinhSibonysuper}) implies
\begin{equation}\label{eq:Hartogs}
	\mu_f=\lim_{n\to\infty}\frac{1}{d^{ns}}(f^n)^*(\omega^s)\wedge \frac{1}{\delta^{n(k-s)}}(f^{-n})^*(\omega^{k-s}).
\end{equation}
The measure $\mu_f$ satisfies $\int \log d(x,I_f) d\mu_f >-\infty$ and is hyperbolic
\[\chi_1\geq\cdots \geq \chi_s>0>\chi_{s+1}\geq\cdots\geq \chi_k,\]
where $\chi_i$ is the $i$-th Lyapunov exponent of $\mu_f$.

\medskip

We define $X_n$ as a finite sequence of blowups $\psi_n:X_n\to\mathbb{P}^k$ of $\mathbb{P}^k$ such that the maps $f^n\circ\psi_n$ and $f^{-n}\circ \psi_n$ extend as morphisms $f_n,g_n:X_n\to\mathbb{P}^k$. This amounts to the fact that the following diagram commutes:
\begin{align*}
\xymatrix{ &X_n\ar[d]_{\psi_n} \ar[rd]^{f_n}\ar[ld]_{g_n}& \\
\mathbb{P}^k& \mathbb{P}^k\ar@{-->}[r]_{f^n} \ar@{-->}[l]^{f^{-n}}  & \mathbb{P}^k}
\end{align*}

We let $\bar{L}_0$ be the classical adelic metrization on $\mathscr{O}_{\mathbb{P}^k}(1)$, so that in particular $h_{\bar{L}_0}\geq0$ on $\mathbb{P}^k(\bar{\mathbb{Q}})$ and, if $\omega$ be the Fubini Study form on $\mathbb{P}^k$, then $\omega$ is the curvature form of $\bar{L}_0$ over $\mathbb{C}$. Let
\[\bar{L}_n:=\frac{1}{d^n}f_n^*\bar{L}_0+\frac{1}{\delta^n}g_n^*\bar{L}_0.\]
In what follows, we denote by $h$ the naive logarithmic height on $\mathbb{P}^k$. We prove here the following.

\begin{proposition}\label{prop:quasi-height}
Let $f$ be a birational selfmap of $\mathbb{P}^k$ satisfying \eqref{AS} and \eqref{gooddim}. Assume in addition there is a constant $C\geq0$ such that
\[\frac{1}{d}h\circ f+\frac{1}{\delta}h\circ f^{-1}\geq \left(1+\frac{1}{d\delta}\right)h-C\]
on $(\mathbb{P}^k\setminus (I_f\cup I_{f^{-1}}))(\bar{\mathbb{Q}})$. With the above notations, $(\mathbb{P}^k,\mu_f,X_n,\bar{L}_n)$ is a quasi-height at the complex place and any sequence $(F_i)$ of Galois-invariant finite sets $F_i\subset\mathbb{P}^k(\bar{\mathbb{Q}})$ of periodic points of $f$ is quasi-small.
\end{proposition}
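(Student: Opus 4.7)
The plan is to verify the two conditions defining a quasi-height at the complex place $v\in M_\mathbb{K}$ and then derive quasi-smallness of an \emph{arbitrary} sequence of Galois-invariant finite sets of periodic points directly from Proposition~\ref{prop:canonical-height}. The strategy for condition~(1) is to expand the curvature form $c_1(\bar{L}_n)_v = d^{-n}f_n^*\omega + \delta^{-n} g_n^*\omega$ via the binomial theorem, isolate the $j=s$ term (which will carry the full limiting mass), and let the other terms vanish by dynamical-degree scaling. Condition~(2) will be reduced, via Chambert-Loir's formula, to a uniform bound on geometric intersection numbers plus a finite sum of local mutual energies, controlled at the archimedean places by Theorem~\ref{tm:DTV-global}. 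Quasi-smallness will follow directly from inequality \eqref{cequonveut?}.

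For condition~(1), I will push forward the binomial expansion
\[
(\psi_n)_* c_1(\bar{L}_n)_v^k = \sum_{j=0}^k \binom{k}{j} \frac{1}{d^{nj}\delta^{n(k-j)}} (\psi_n)_*\bigl((f_n^*\omega)^j \wedge (g_n^*\omega)^{k-j}\bigr)
\]
to $\mathbb{P}^k(\mathbb{C})$. For the $j=s$ summand, the pushforward equals the wedge product $(f^n)^*\omega^s \wedge (f^{-n})^*\omega^{k-s}$ defined via the model $X_n$; Hartogs-continuity of the wedge product on wedgeable currents together with \eqref{eq:Hartogs} yields weak convergence to $\binom{k}{s}\mu_f$ after normalization. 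For $j\neq s$, the corresponding term will vanish in the limit: using $\lambda_m(f)=d^m$ for $m\leq s$ and $\lambda_m(f)=\delta^{k-m}$ for $m\geq s$ together with $d^s=\delta^{k-s}$, at least one of the normalized factors $d^{-nj}(f^n)^*\omega^j$ or $\delta^{-n(k-j)}(f^{-n})^*\omega^{k-j}$ has total mass going to zero like a negative power of $(d\delta)^n$. Consequently $\mathrm{vol}(L_n)\to V:=\binom{k}{s}>0$ and $\mathrm{vol}(L_n)^{-1}(\psi_n)_* c_1(\bar{L}_n)^k_v\to \mu_f$ weakly.

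For condition~(2), given an ample $\bar{M}_0$ with semi-positive adelic metric, I will expand $(\bar{L}_n)^{k+1-j}$ multilinearly and apply Chambert-Loir's inductive formula, which decomposes each arithmetic intersection $(\psi_n^*\bar{M}_0)^j\cdot(\bar{L}_n)^{k+1-j}$ into a geometric intersection on $X_n$ plus a finite sum over places of local quasi-potential integrals. The geometric intersections will be bounded uniformly in $n$ by the same cohomological decomposition $f_n^*[\omega]=d^n\psi_n^*[\omega]+E_n^+$ (and its analogue for $g_n$) together with the dynamical-degree analysis of condition~(1). The archimedean local contributions are exactly of the type estimated in Theorem~\ref{tm:DTV-global}, applied to the $f$-good subvarieties $I_{f^{-1}}$ (for $f$) and $I_f$ (for $f^{-1}$), and their iterates; this yields the uniform bound. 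The non-archimedean contributions are confined to the finite set of places where $\bar{L}_0$ or $\bar{M}_0$ differs from a model metric, and are therefore harmless.

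For quasi-smallness, let $(F_i)_i$ be \emph{any} sequence of Galois-invariant finite subsets of periodic points of $f$. Every periodic $x$ avoids $\bigcup_{m\geq 0}(f^m(I_{f^{-1}})\cup f^{-m}(I_f))$ (otherwise some forward or backward iterate would be undefined), so $x\in\mathbb{P}^k_f(\bar{\mathbb{Q}})$ and $\psi_n^{-1}(x)$ reduces to a single point. Proposition~\ref{prop:canonical-height} gives $\widehat{h}_f^+(x)=\widehat{h}_f^-(x)=0$, and \eqref{cequonveut?} yields $h_{\bar{L}_n}(\psi_n^{-1}(x)) = d^{-n}h(f^n(x))+\delta^{-n}h(f^{-n}(x)) \leq \epsilon_n$ uniformly in $x$, with $\epsilon_n\to 0$. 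Averaging over $F_i$ gives $h_{\bar{L}_n}(\psi_n^{-1}(F_i))\leq \epsilon_n$ for every $i$. Since $\bar{L}_n$ is a positive combination of pullbacks of the semi-positive adelic bundle $\bar{L}_0$ by birational morphisms, it is big and nef, so Zhang's positivity gives $h_{\bar{L}_n}(X_n)\geq 0$. Hence $\varepsilon_n(\{F_i\})\leq \epsilon_n$ with $\limsup_n \varepsilon_n(\{F_i\})\leq 0$, establishing quasi-smallness. The hard part will be the uniform boundedness in condition~(2): tracking the exceptional-divisor contributions on $X_n$ across all places simultaneously, where the finite-energy input of Theorem~\ref{tm:DTV-global} is the essential ingredient.
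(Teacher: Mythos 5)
Your treatment of condition (1) and of quasi-smallness follows the paper's own proof: binomial expansion of $c_1(\bar{L}_n)^k$, pushforward to $\mathbb{P}^k$ away from the exceptional locus, B\'ezout/dynamical-degree estimates killing the $j\neq s$ terms, Hartogs convergence \eqref{eq:Hartogs} for the $j=s$ term, and \eqref{cequonveut?} together with $h_{\bar{L}_n}(X_n)\geq0$ for quasi-smallness. (In the last step you should also record the $O(\min\{d,\delta\}^{-n})$ discrepancy between $h_{\bar{L}_n}(\psi_n^{-1}(x))$ and $d^{-n}h(f^n(x))+\delta^{-n}h(f^{-n}(x))$; it is harmless but it is there.)

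The gap is in condition (2). You assert that the archimedean local terms are ``exactly of the type estimated in Theorem~\ref{tm:DTV-global}'' and that ``the finite-energy input of Theorem~\ref{tm:DTV-global} is the essential ingredient.'' This is the wrong tool, for two reasons. First, direction: Theorem~\ref{tm:DTV-global} is a \emph{lower} bound (the energy sums do not diverge to $-\infty$), whereas condition (2) demands a \emph{uniform upper} bound on $(\psi_n^*\bar{M}_0)^j\cdot(\bar{L}_n)^{k+1-j}$. Second, domain: after peeling off the trivially-underlying bundle $\bar{M}_n:=\bar{L}_n-2\psi_n^*\bar{L}_0$, the local terms are integrals of $u_{n,v}=\varphi_{f,v}\circ\psi_n+\varphi_{f^{-1},v}\circ\psi_n$ over all of $X_{n,v}^{\mathrm{an}}$, not energies on the iterates $f^n(I_{f^{\pm1}})$ of an $f$-good subvariety, so Theorem~\ref{tm:DTV-global} does not even apply to them. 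What actually closes the argument is the elementary estimate \eqref{ineg:trivial}: $\varphi_{f^{\pm1},v}\leq C_v$ with $C_v=0$ outside a finite set of places, so each local term is at most $C_v$ times the geometric intersection number $((\psi_n^*M)^j\cdot L_n^{k-j})$, which is bounded uniformly in $n$ by the dynamical-degree computation; iterating replaces each factor $\bar{L}_n$ by $2\psi_n^*\bar{L}_0$ up to bounded error, and the projection formula finishes. No finite-energy input is needed for condition (2); Theorem~\ref{tm:DTV-global} enters the proof of Theorem~\ref{tm:distrib-periodic} only through Theorem~\ref{tm:good-dynamics}, i.e.\ through the existence of $\mu_f$ and the Hartogs convergence used in condition (1).
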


\begin{proof}
We first check condition (1). Note that, since $f_n$ and $g_n$ are generically finite dominant morphisms, and since $\mathscr{O}_{\mathbb{P}^k}(1)$ is ample, the $\mathbb{Q}$-line bundle $L_n$ is big and nef. In particular, $\mathrm{vol}(L_n)=(L_n^k)$.
Then we can compute
\begin{align*}
\mathrm{vol}(L_n) &=\int_{X_n(\mathbb{C})}\left(\frac{1}{d^n}f_n^*\omega+\frac{1}{\delta^n}g_n^*\omega\right)^k\\
& = \sum_{j=0}^k{k \choose j} \frac{1}{(d^{j}\delta^{k-j})^n}\int_{X_n(\mathbb{C})}f_n^*(\omega^j)\wedge g_n^*(\omega^{k-j}).
\end{align*}
Fix $0\leq j\leq k$. As $\omega$ is a smooth form and as $f_n$ and $g_n$ are morphisms, for any closed subvariety $Y\subsetneq X_n$, the measure $f_n^*(\omega^j)\wedge g_n^*(\omega^{k-j})$ does not give mass to $Y(\mathbb{C})$. Let $Y_n:=\psi_n^{-1}\left(\bigcup_{0\leq \ell\leq n}f^\ell(I_{f^{-1}})\cup f^{-\ell}(I_{f})\right)$, so that $\psi_n$ is an isomorphism from $X_n\setminus Y_n$ to its image $Z_n:=\bigcup_{0\leq \ell\leq n}f^\ell(I_{f^{-1}})\cup f^{-\ell}(I_{f})$. We then have
\begin{align*}
\int_{X_n(\mathbb{C})}f_n^*(\omega^j)\wedge g_n^*(\omega^{k-j}) & =\int_{X_n(\mathbb{C})\setminus Y_n(\mathbb{C})}f_n^*(\omega^j)\wedge g_n^*(\omega^{k-j})\\
& = \int_{X_n(\mathbb{C})\setminus Y_n(\mathbb{C})}\psi_n^*\left((f^n)^*(\omega^j)\wedge (f^{-n})^*(\omega^{k-j})\right)\\
& = \int_{\mathbb{P}^k(\mathbb{C})\setminus Z_n(\mathbb{C})}(f^n)^*(\omega^j)\wedge (f^{-n})^*(\omega^{k-j}).
\end{align*}
We now use Bézout Theorem for currents to find
\begin{align*}
\int_{\mathbb{P}^k(\mathbb{C})\setminus Z_n(\mathbb{C})}(f^n)^*(\omega^j) & \wedge (f^{-n})^*(\omega^{k-j})\leq \\
&\left(\int_{\mathbb{P}^k(\mathbb{C})}(f^n)^*(\omega^j)\wedge \omega^{k-j}\right)\times \left(\int_{\mathbb{P}^k(\mathbb{C})}(f^{-n})^*(\omega^{k-j})\wedge \omega^{j}\right),
\end{align*}
with equality when $j=s$. By assumption, this gives
\begin{align*}
\int_{X_n(\mathbb{C})}f_n^*(\omega^j)\wedge g_n^*(\omega^{k-j}) \leq \lambda_j(f)^n\times \lambda_{k-j}(f^{-1})^n,
\end{align*}
and we have proved that $(d^{-j}\delta^{j-k})^nf_n^*(\omega^j)\wedge g_n^*(\omega^{k-j})$ has mass at most 
\[\left(\frac{\lambda_j(f)}{d^j}\frac{\lambda_{k-j}(f^{-1})}{\delta^{k-j}}\right)^n=\left\{\begin{array}{ll}
O\left({\delta}^{-n}\right) & \text{if} \ \ j<s,\\
1 & \text{if} \ \ j=s,\\
O\left({d}^{-n}\right) & \text{if} \ \ j>s.
\end{array}\right.\]
In particular, the volume of $L_n$ satisfies
\[\mathrm{vol}(L_n) \leq \sum_{j=0}^k{k \choose j} \left(\frac{\lambda_j(f)}{d^j}\frac{\lambda_{k-j}(f^{-1})}{\delta^{k-j}}\right)^n= {k \choose s} +o(1).\]
Moreover, for $j=s$, the measure $(d^{-s}\delta^{s-k})^nf_n^*(\omega^s)\wedge g_n^*(\omega^{k-s})$ is a probability measure, whence $\mathrm{vol}(L_n)\geq{k \choose s} $ for any $n$, so that $\lim_{n\to\infty}\mathrm{vol}(L_n)={k \choose s} =:\mathrm{V}>0$. 

\medskip

We now show that $\mathrm{vol}(L_n)^{-1}(\psi_n)_*c_1(\bar{L}_n)^k$ converges to the measure $\mu_f=T_f^s\wedge T_{f^{-1}}^{k-s}$. As above, we have
\begin{align*}
(\psi_n)_*c_1(\bar{L}_n)^k &=\sum_{j=0}^k{k \choose j} \frac{1}{d^{nj}\delta^{n(k-j)}}(\psi_n)_*\left(f_n^*(\omega^j)\wedge g_n^*(\omega^{k-j})\right)\\
& = \frac{{k\choose s}}{d^{ns}\delta^{n(k-s)}}(\psi_n)_*\left(f_n^*(\omega^s)\wedge g_n^*(\omega^{k-s})\right)+\nu_n,
\end{align*}
where the mass of $\nu_n$ is $O(\min\{d,\delta\}^{-n})$, whence tends to $0$ as $n\to\infty$. Also
\begin{align*}
\frac{1}{d^{ns}\delta^{n(k-s)}}(\psi_n)_*\left(f_n^*(\omega^s)\wedge g_n^*(\omega^{k-s})\right)=\frac{1}{d^{ns}\delta^{n(k-s)}}(f^n)^*(\omega^s)\wedge (f^{-n})^*(\omega^{k-s}),
\end{align*}
which converges towards $\mu_f$ as $n\to\infty$ by \eqref{eq:Hartogs}. Since $\mathrm{vol}(L_n)\to {k \choose s}$, this gives
\[\lim_{n\to\infty} \mathrm{vol}(L_n)^{-1}(\psi_n)_*c_1(\bar{L}_n)^k=\mu_f,\]
as expected.

%
%
%
%
%
%
%

\medskip

We now check condition (2).
The metrized line bundle $\bar{M}_n:=\bar{L}_n-2\psi_n^*\bar{L}_0$ is integrable (in the sense of Zhang) with underlying trivial bundle. Fix a place $v\in M_\mathbb{K}$. Let $E_n$ be the exceptional divisor of $\psi_n$. Then the function
\[u_{n,v}:=\varphi_{f,v}\circ\psi_n+\varphi_{f^{-1},v}\circ\psi_n:(X_n\setminus E_n)(\mathbb{C}_v)\to\mathbb{R}\]
extends continuously to $X_{n,v}^{\mathrm{an}}\setminus E_{n,v}^{\mathrm{an}}$, where $\varphi_{f^{\pm 1},v}$ are the functions introduced in \S~\ref{sec:DTV}. Applying \eqref{ineg:trivial} to $f$ and $f^{-1}$, we see that there exists a constant $C_v\geq0$ such that
\[u_{n,v}\leq C_v \quad \text{on} \ \ X_{n,v}^{\mathrm{an}},\]
and there exists a finite set $S\subset M_\mathbb{K}$ such that $C_v=0$ for all $v\notin S$. 

\medskip

We now pick an integer $N\geq1$ and let $\bar{M}$ be the line bundle $M:=\mathscr{O}_{\mathbb{P}^k}(N)$ endowed with an adelic semi-positive continuous metrization. 
Pick any integer $2\leq j\leq k$. Then
\begin{align*}
\left(\psi_n^*(\bar{M})\right)^j\cdot \left(\bar{L}_n\right)^{k+1-j} & =\left(\psi_n^*(\bar{M})\right)^j\cdot \left(\bar{L}_n\right)^{k-j}\cdot\left(\bar{M}_n\right)+2\left(\psi_n^*(\bar{M})\right)^j\cdot \left(\bar{L}_n\right)^{k-j}\cdot\left(\psi_n^*\bar{L}_0\right).
\end{align*}
As $M_n$ is the trivial bundle on $X_n$, we have
\begin{align*}
\left(\psi_n^*(\bar{M})\right)^j\cdot \left(\bar{L}_n\right)^{k-j}\cdot\left(\bar{M}_n\right) & = \sum_{v\in M_\mathbb{K}}n_v\int_{X_{n,v}^\mathrm{an}}u_{n,v}\cdot c_1(\bar{L}_n)_v^{k-j}\wedge c_1((\psi_n)^*\bar{M})^j_v\\
& \leq C_1\cdot ((\psi_n)^*M^j\cdot L_n^{k-j})\leq C_1 \max\{N,2\}^{k},
\end{align*}
where $C_1:=\sum_{v\in M_\mathbb{K}}n_v\cdot C_v\in\mathbb{R}_+$. In particular, the constant $C_2:=C_1 \max\{N,2\}^{k}$ depends only on $c_1(M)$ and
\begin{align*}
\left(\psi_n^*(\bar{M})\right)^j\cdot \left(\bar{L}_n\right)^{k+1-j} \leq C_2+2\left(\psi_n^*(\bar{M})\right)^j\cdot \left(\bar{L}_n\right)^{k-j}\cdot\left(\psi_n^*\bar{L}_0\right).
\end{align*}
In particular, iterating the process and using the projection formula, we deduce
\begin{align*}
\left(\psi_n^*(\bar{M})\right)^j\cdot \left(\bar{L}_n\right)^{k+1-j} & \leq \left(\sum_{\ell = 0}^{k+1-j}2^\ell C_2 \right)+2^{k+1-j}\left(\psi_n^*(\bar{M})\right)^j\cdot \left(\psi_n^*\bar{L}_0\right)^{k+1-j}\\
& \leq \left(\sum_{\ell = 0}^{k+1-j}2^\ell C_2 \right)C_2+2^{k+1-j}\left(\bar{M}\right)^j\cdot \left(\bar{L}_0\right)^{k+1-j}
\end{align*}
The conclusion follows taking $C:=\left(\sum_{\ell = 0}^{k+1}2^\ell C_2 \right)C_2+\max_j 2^{k+1-j}\left(\bar{M}\right)^j\cdot \left(\bar{L}_0\right)^{k+1-j}$.

\medskip

To conclude the proof, we check that periodic points are quasi-small. By construction, we have 
\[h_{\bar{L}_n}=\frac{1}{d^n}h_{\bar{L}_0}\circ f_n+\frac{1}{\delta^n}h_{\bar{L}_0}\circ g_n=\frac{1}{d^n}h\circ f_n+\frac{1}{\delta^n}h\circ g_n +O(\min\{d,\delta\}^{-n})\quad \text{on} \ X_n(\bar{\mathbb{Q}}),\]
by our choice of $\bar{L}_0$. In particular, we have $h_{\bar{L}_n}\geq0$ on $ X_n(\bar{\mathbb{Q}})$ and $h_{\bar{L}_n}(X_n)\geq0$ by, e.g.,~\cite[Lemma~7]{Good-height}. As above, we denote by $\mathbb{P}_f^k(\bar{\mathbb{Q}})$ the set of points with well-defined orbits:
\[\mathbb{P}_f^k(\bar{\mathbb{Q}}):=\mathbb{P}^k(\bar{\mathbb{Q}})\setminus \left(\bigcup_{n\geq0}f^{-n}(I_f)\cup f^n(I_{f^{-1}})\right).\]
Pick any $n\geq1$. By construction of $X_n$ and $\bar{L}_n$, for $x\in\mathbb{P}^k_f(\bar{\mathbb{Q}})$, $x$ avoids the exceptional set of $\psi_n$ for any $n$ and
\begin{align*}
h_{\bar{L}_n}(\psi_n^{-1}(x)) & =\frac{1}{d^n}h_{\bar{L}_0}\circ f_n(\psi_n^{-1}(x))+\frac{1}{\delta^n}h_{\bar{L}_0}\circ g_n(\psi_n^{-1}(x))\\
& =\frac{1}{d^n}h(f^n(x))+\frac{1}{\delta^n}h(f^{-n}(x))+O(\min\{d,\delta\}^{-n}).
\end{align*}
Together with Proposition~\ref{prop:canonical-height}, this implies
\begin{align*}
h_{\bar{L}_n}(\psi_n^{-1}(x))\leq \widehat{h}_f^+(x)+\widehat{h}_f^-(x)+\epsilon_n+O(\min\{d,\delta\}^{-n}), \quad x\in \mathbb{P}_f^k(\bar{\mathbb{Q}}),
\end{align*}
and $h_{\bar{L}_n}(\psi_n^{-1}(x))\leq \epsilon_n+O(\min\{d,\delta\}^{-n})$ for all periodic points $x\in \mathbb{P}^k(\bar{\mathbb{Q}})$ (since $\widehat{h}_f^+(x)=\widehat{h}_f^-(x)=0$ for $x$ periodic). As $\min\{d,\delta\}\geq2$ and $\epsilon_n\to0$, this concludes the proof of the Proposition.
\end{proof}
\subsection{Proof of Theorem~\ref{tm:distrib-periodic} and examples}

Let us explain quickly how to deduce from \cite{Thelin-Nguyen} that the set of isolated periodic points of $f$ is Zariski dense in $\mathbb{P}^k$. By \cite[Theorem~3.4.13]{ThelinVigny1}, we know that the measure $\mu_f$ does not charge (pluripolar hence) strict algebraic sets. We are thus in the settings of \cite[Théorème~5]{Thelin-Nguyen}: isolated hyperbolic periodic points accumulated to a set of measure arbitrarily close to 1 (recurring orbits are of full measure on the natural extension for the lift of $\mu_f$ since it is ergodic) and are thus Zariski dense.

In particular, there exist generic sequences $\{F_i\}_i$ of Galois invariant finite subsets of $\mathbb{P}^k(\bar{\mathbb{Q}})$ of periodic points of $f$ (we do not claim that all points in $\{F_i\}_i$ are hyperbolic). We apply Proposition~\ref{prop:quasi-height} to end the proof of Theorem~\ref{tm:distrib-periodic}. 

\begin{example}\label{examples} \normalfont It is easy to produce examples of birational maps of $\P^k$ defined over a number field $\mathbb{K}$, satisfying \eqref{AS}, \eqref{gooddim} and \eqref{Lee}. To do so, start with a regular automorphism $f$ of $\C^k$ defined of  $\mathbb{K}$, see \cite{Sibony}, which obviously satisfies \eqref{AS}, \eqref{gooddim} and also \eqref{Lee} by \cite[Corollary~C]{Kawaguchi_inequality} or \cite[Theorem~1.2]{Lee_inequality}. Then, for $A\in \mathrm{PSL}(k+1, \mathbb{K})$, $A \circ f$ still satisfies \eqref{gooddim}  and \eqref{Lee}. For $A$ sufficiently close to the identity (at the complex place), $A \circ f$ will still satisfy the improved algebraic stability because it is a \emph{regular birational map} of $\P^k(\C)$: its indeterminacy sets are contained in two disjoint fixed open sets of $\P^k(\C)$, see~\cite{Dinh-Sibony_regular} for a detailed study of such maps. By \cite{Bedford-Diller, ThelinVigny1}, we know that outside a pluripolar set of maps $A \in \mathrm{PSL}(k+1, \C)$, then $A \circ f$  
	satisfies  \eqref{AS} and the energy condition, nevertheless, countable sets are pluripolar so it could be that the $A\in \mathrm{PSL}(k+1, \mathbb{K})$ for which  $A \circ f$ satisfies the improved algebraic stability are exactly those for which $A\circ f$ is a regular birational map.
	
	Let us a give a slight modification of the above construction to show it can produce many birational maps that satisfy the improved algebraic stability. Let us stick to the case of dimension $2$ and degree $2$ for simplicity. Pick $a,b$ in $\mathbb{K}$, a prime number $p$ and a $p$-adic absolute value $|.|_p$ on $\mathbb{K}$ with $|a|_p=|b|_p=1$ and consider the Hénon map
	\[ f([x:y:t])=([x^2+yt+at^2: b xt: t^2]) \ \mathrm{and}\  f^{-1}([x:y:t])=([yt/b: xt- y^2/b^2-at^2: t^2]) \]
	so $I_f= [0:1:0]$ and $I_{f^{-1}}=[1:0:0]$. Take $A \in  \mathrm{PSL}(3, \mathbb{K})$ such that 
	\[A^{-1}([x,y,t])=[a_1x+b_1y+c_1t: a_2x+b_2y+c_2t:a_3x+b_3y+c_3t].\]
with $|b_2|_p$ strictly larger than the $p$-adic absolute values of all the others numbers $a_i$, $b_i$, $c_i$. Then we claim that the map $f \circ A$ is algebraically stable.
	
	For that, we claim by induction on $n$ that, writing  $( f \circ A)^{-n}(A^{-1}(I_f))=[x_n:y_n:t_n]$, then $ |y_n|_p > \max |x_n|_p, |t_n|_p$ which implies the algebraic stability as $ \{(f \circ A)^{-n}I_{f\circ A}\} \cap  \{I_{(f\circ A)^{-1}}\}=\varnothing$. Indeed, observe that $I_{f\circ A}= A^{-1}(I_f)= [b_1:b_2:b_3]$ so the case $n=0$ is clear. Now, assume   $ |y_n|_p > \max |x_n|_p, |t_n|_p$ for some $n$, then 
	\[ [x_{n+1}:y_{n+1}:t_{n+1}]= A^{-1}[y_n t_n/b: x_nt_n-y_n^2/b^2-at_n^2:t_n^2] \]
	and by the strong triangular inequality $|x_nt_n-y_n^2/b^2-at_n^2|_p=|y_n|_p^2$ so applying $A^{-1}$ concludes the induction as $b_2$ dominates all the other coefficients. 

To go further, we show that we can impose the condition that the backward orbit of $I_f$ is Zariski dense. Let us sketch the construction:  assume $|b_2|_p\gg |b_1|_p \gg |b_3|_p \gg \max|a_i|_p,|c_j|_p$ so that $|y_0|_p >2|x_0|_p>4|t_0|_p \neq 0$. An immediate induction then shows that  
\[ |y_n|_p >2^{n+1}|x_n|_p>4^{n+1}|t_n|_p \neq 0.\]
Take any hypersurface over $\Q$. It is given by the equation $P(x,y,t)=0$ for some homogeneous polynomial of degree $p$:
\[ P(x,y,t)= \sum_{i_1+i_2+i_3=p} a_{i_1,i_2,i_3} x^{i_1}y^{i_2}t^{i_3}\]
where the  $a_{i_1,i_2,i_3}$ are in $\bar{\Q}$. Then, it is easy to see that for $n$ large enough, one cannot have $[x_n:y_n:t_n]\stackrel{\forall n}{\in} (P=0)$. 
\end{example}

\bibliographystyle{short}
\bibliography{biblio}
\end{document}